\documentclass[12pt]{article}
\usepackage{esvect}
\usepackage[left=0.90 in, right=0.90 in,top=.75 in, bottom=.75 in]{geometry}

\usepackage{amssymb, amsfonts, amssymb, amsmath, amsthm, enumerate, multicol, xspace, array, multirow, graphicx, fontenc, wasysym, verbatim,MnSymbol,float}
\usepackage{proof}
\usepackage{todonotes}

\usepackage{accsupp}
\usepackage{MnSymbol}

\usetikzlibrary{decorations.pathreplacing,calc}

\def\len{\mathrm{len}}
\def\rtree{\mathrm{tree}}

\def\mbC{\mathbb{C}}

\def\me{\mathfrak{E}}
\def\ma{\mathfrak{A}}

\def\vp{\varphi}

\newcommand{\newmacro}[1]{\mathfrak{#1}}
\def\A{\mathbb{A}}
\newcommand{\E}{\newmacro{E}}
\newcommand{\vwE}{\bar{\E}}
\newcommand{\vwA}{\bar{\ma}}

\newcommand{\mc}[1]{\mathcal{#1}}

\newtheorem{lem}{Lemma}[section]

\newtheorem{prop}[lem]{Proposition}
\newtheorem{thm}[lem]{Theorem}
\newtheorem{remark}[lem]{Remark}
\newtheorem{defn}[lem]{Definition}

\newtheorem*{prop*}{Proposition}
\newtheorem*{thm*}{Theorem}
\newtheorem*{def*}{Definition}
\newtheorem*{lem*}{Lemma}

\title{Scott spectral gaps for trees are bounded}
\author{Matthew Harrison-Trainor and J. Thomas Kim\thanks{Harrison-Trainor was partially supported by a Sloan Research Fellowship and by the National Science Foundation under Grant DMS-2452105 and DMS-2419591/DMS-2153823. Thomas Kim was supported as an REU student by the National Science Foundation under grant DMS-2452105.}}

\begin{document}

\maketitle

\begin{abstract}
    Given a Borel class of trees, we show that there is a tree in that class whose Scott sentence is not too much more complicated than the definition of the class. In particular, if the class is definable by a $\Pi_\alpha$ sentence, then there is a model of Scott rank at most $\alpha + 2$. This gives another proof---and one that does not require first proving Vaught's conjecture for trees---of the fact that trees are not faithfully Borel complete.
\end{abstract}

\section{Introduction}

Given a countable mathematical structure $\mc{A}$, such as a graph, group, or linear order, Scott \cite{Sco65} showed that there is a sentence $\varphi$ of the infinitary logic $\mc{L}_{\omega_1 \omega}$ that characterizes that structure up to isomorphism among countable structures in the sense that for countable structures $\mc{B}$, we have $\mc{B} \models \varphi$ if and only if $\mc{B} \cong \mc{A}$. Such a sentence is called a Scott sentence for $\mc{A}$. We can measure the complexity of describing $\mc{A}$ up to isomorphism by assigning to $\mc{A}$ an ordinal-valued Scott rank. There are a number of almost but not quite equivalent definitions of Scott rank. The one that we will use is due to Montalb\'an \cite{MonSR} and defines the Scott rank of $\mc{A}$ to be the least $\alpha$ such that $\mc{A}$ has a $\Pi_{\alpha + 1}$ Scott sentence. This definition is particularly robust, and measures in addition the complexity of describing the automorphism orbits of the structure.

Given an $\mc{L}_{\omega_1 \omega}$ sentence $\varphi$, we think of $\varphi$ as a theory defining a class of models, and we ask what the possible Scott ranks of the models of $\varphi$ might be.

\begin{defn}
    Let $\varphi$ be an $\mc{L}_{\omega_1 \omega}$ sentence. The Scott spectrum of $\varphi$ is the set
    \[ \text{SS}(\varphi) = \{ \alpha \in \omega_1 \; | \; \text{there is $\mc{A} \models \varphi$ of Scott rank $\alpha$}\}.\]
\end{defn}

\noindent In \cite{HTScott} the first author showed that there are theories $\varphi$ which have low complexity, but all of whose models have high complexity. For example, for any $\alpha$, there is a $\Pi_2$ sentence $\varphi$ all of whose models have Scott rank $\geq \alpha$. Moreover, such an example could be found within any universal class of structures (see \cite{HKSS} and Chapter VI of \cite{MBook1}), such as graphs or fields. On the other hand, in \cite{HTGonzalez}, the first author and Gonzalez showed that if a $\Pi_{\alpha}$  sentence $\varphi$ extends the axioms of linear orders then there is a model of $\varphi$ of Scott rank at most $\alpha +4$. This was the first non-trivial case in which such a result is known.

In this paper we consider the case of (simple) trees, which are rooted connected acyclic graphs. We use the language $\mc{L}_{tree} = \{ r, P \}$ where $r$ is a constant standing for the root and $P$ is the parent relation, that is, $P(x) = y$ means that $x$ is the parent of $y$. (We write the parent relation using functional notation for clarity, but formally we take our language to be a relational language so that we can define the back-and-forth relations in the standard way.) Our motivation for considering trees arises from faithful Borel embeddings, a topic somewhat connected to Vaught's conjecture. Steel \cite{Ste78} proved that Vaught's conjecture is true for linear orders and trees,\footnote{And indeed for a more general definition of trees than the one that we consider here.} while of course Vaught's conjecture is open in general. While linear orders and trees are Borel complete \cite{FS89}, this is not enough to imply that Vaught's conjecture is true in general. The reason is that, as shown by Gao \cite{Gao01}, linear orders and trees are not faithfully Borel complete, that is, the saturation of the image of a Borel reduction from graphs to linear orders or trees cannot be Borel. For classes of structures which are faithfully Borel complete, including for example graphs, Vaught's conjecture for that class would imply the full Vaught's conjecture. As proved in \cite{HTGonzalez}, having bounded Scott spectral gaps implies that a class is not faithfully Borel complete, and in a way which does not require first verifying Vaught's conjecture for that class (whereas Gao's argument went through proving a strengthening of Vaught's conjecture for linear orders and trees). A central open question is whether Boolean algebras are faithfully Borel complete and whether Vaught's conjecture is true for them, and so studying Scott spectral gaps for Boolean algebras would be a way of studying the former question without having to first resolve the latter. We refer the reader to \cite{HTGonzalez} for a more detailed discussion of these connections.

In this context, it is natural to try to prove that Scott spectra gaps are bounded for trees. This is the main result of this paper.

\begin{thm}
    Given a satisfiable $\Pi_\alpha$ sentence $\varphi$ of trees, there is a tree $\mc{A} \models \varphi$ with a $\Pi_{\alpha+3}$ Scott sentence and hence Scott rank at most $\alpha + 2$.
\end{thm}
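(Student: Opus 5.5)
The plan is to take an arbitrary countable tree $\mc{B} \models \varphi$ and modify it, without changing which $\Pi_\alpha$ sentences it satisfies, into a tree $\mc{A}$ whose isomorphism type is pinned down at level $\Pi_{\alpha+3}$. The tool is the back-and-forth relations $\leq_\beta$ for trees, together with the standard fact that if $\mc{B} \leq_\alpha \mc{A}$, then every $\Pi_\alpha$ sentence true in $\mc{B}$ is true in $\mc{A}$. So it suffices to build $\mc{A}$ with $\mc{B} \leq_\alpha \mc{A}$ (or $\mc{A} \equiv_\alpha \mc{B}$) such that $\mc{A}$ has a $\Pi_{\alpha+3}$ Scott sentence.

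First I would give a recursive description of $\leq_\beta$ for trees in terms of subtrees. For a node $x$, let $\mc{B}_x$ be the subtree above $x$. Since a tree is determined by its root and the multiset of isomorphism types of the subtrees rooted at the root's children, the $\beta$-back-and-forth type of a rooted tree is determined by the multiset (with counts truncated at some finite or $\omega$ threshold) of the $\gamma$-types, $\gamma<\beta$, of the children's subtrees. I would prove a lemma of the form: $\mc{B} \leq_\beta \mc{C}$ iff for each $\gamma<\beta$, the $\gamma$-types of children of the roots match with multiplicities truncated appropriately. This is the analogue of the Ash-type computations for linear orders, and it is easier here because children are unordered.

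Next comes the construction, a ``saturation/canonicalization at level $\alpha$''. I would replace $\mc{B}$ by $\mc{A}$ as follows. Working down from the root, each child subtree is replaced by a canonical representative of its $\alpha$-type, chosen among trees realizing that type. Each such representative is repeated $\omega$ times whenever it occurs at least once, provided the truncation in the lemma allows this without changing the $\alpha$-type; when the count is finite and relevant, the exact count is kept. The aim is that in $\mc{A}$, for any node $x$, the isomorphism type of $\mc{A}_x$ is determined by its $\alpha$-type (or $(\alpha+1)$-type) relative to $\mc{A}$. Then the orbit of any tuple is defined by a $\Sigma_{\alpha+2}$ formula saying ``the path to each element has nodes of these $\alpha$-types with these children-type patterns''. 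Montalbán's characterization then gives a $\Pi_{\alpha+3}$ Scott sentence, and hence Scott rank at most $\alpha+2$. Depth is the issue: subtrees at arbitrary finite depth must be canonicalized simultaneously, so I would define $\mc{A}$ as a limit of finitely-deep modifications, or directly as a tree of types, where nodes are finite sequences of $\alpha$-types realized along paths in $\mc{B}$, with each child type duplicated $\omega$ times.

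I expect the main obstacle to be verifying both at once: $\mc{A} \equiv_\alpha \mc{B}$, and the fact that $\alpha$-types in $\mc{A}$ are ``homogeneous'', meaning two nodes with the same $\alpha$-type over the path have isomorphic subtrees. The tension is that the $\alpha$-type of a subtree only constrains its children's $\gamma$-types for $\gamma<\alpha$. Replacing children by $\omega$ copies can therefore change higher-level information, such as whether finitely or infinitely many children realize a given type. That is exactly what costs the extra two quantifier levels beyond $\alpha$, which explains the bound $\alpha+3$ rather than $\alpha+1$. I would first handle successor $\alpha$, using the truncation lemma to check that $\omega$-duplication preserves $\gamma$-types for $\gamma\le\alpha$. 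Limit $\alpha$ would then be handled by a diagonal choice over $\gamma<\alpha$.
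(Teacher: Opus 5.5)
\textbf{Genuine gap.} Your overall route is different from the paper's Henkin/forcing construction, and it can be made to work, but as written the proposal has a gap, and some of its steps would fail. The route is: take a given $\mc{B}\models\varphi$, canonicalize it into an $\equiv_\alpha$-equivalent tree whose subtrees are determined up to isomorphism by their $\alpha$-types, then apply Montalb\'an's theorem.

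\emph{Your characterization lemma is false.} For $\beta=1$ the children's $0$-types carry no information. Yet a root with a single leaf child and a root whose single child has a child are separated by the $\Sigma_1$ sentence ``there is a grandchild''. In general the $\beta$-type of a node depends on $\beta$-level information about the children themselves; for instance ``every child satisfies the $\Pi_\beta$ sentence $\psi$'' is $\Pi_\beta$.

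\emph{The $\omega$-duplication is unnecessary and in general destroys the type.} For $\gamma<\alpha$ and $\psi\in\Pi_\gamma$, the statement ``at most $m$ children of the root satisfy $\psi$'' is $\Pi_{\gamma+1}$; already ``the root has at most two children'' is $\Pi_1$. Taking the children of a path-of-types node to be the union over all realizations of that path breaks multiplicities in the same way. The ``tension'' you describe is created by this step, and it is not where the two extra levels come from.

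\emph{The step you flag as the main obstacle is left unproved, and this is the most serious issue.} Replacing subtrees at all depths simultaneously is not a limit of one-level $\equiv_\alpha$-preserving replacements in any sense that preserves $\equiv_\alpha$. So neither $\mc{A}\equiv_\alpha\mc{B}$ follows, nor faithfulness of the labels, i.e.\ that a node labelled $p$ really has $\alpha$-type $p$, which your orbit formula needs.

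\textbf{How to repair it.} Let $t(x)$ be the $\alpha$-type of the subtree $\mc{B}_x$.
\begin{enumerate}
\item For each realized type $p$, fix one node $x_p$ with $t(x_p)=p$. Let $\mc{A}(p)$ be the unravelling in which a node labelled $q$ has children labelled by \emph{exactly} the multiset of types $t(y)$ of the children $y$ of $x_q$, with no duplication. Write $\ell(u)$ for the label of a node $u$.
\item For a node $x$ and a set $S$ of its children, let $\mc{B}_{x,S}$ be $x$ together with the subtrees $\mc{B}_y$ for $y\in S$. Let $\mc{A}_{x,S}$ be a root with one copy of $\mc{A}(t(y))$ attached for each $y\in S$.
\item Prove $\mc{B}_{x,S}\equiv_\gamma\mc{A}_{x,S}$ for all $x$, $S$ and all $\gamma\le\alpha$. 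Use induction on $\gamma$, with a sub-induction on the depth $n$ of Spoiler's closed tuple.
\begin{itemize}
\item The root component is handled by the hypothesis at $\delta<\gamma$, applied to the remaining children.
\item The part of the tuple inside a child $d$ of type $q$ is first transported along $\mc{B}_d\equiv_\alpha\mc{B}_{x_q}$. It is then moved into $\mc{A}(q)$ by the depth-$(n-1)$ case.
\item The pieces are reassembled using transitivity of $\le_\delta$ and Lemma \ref{3.1}.
\end{itemize}
\item It follows that $\mc{A}:=\mc{A}(t(r))\equiv_\alpha\mc{B}$. Each $\mc{A}_u\cong\mc{A}(\ell(u))$ has $\alpha$-type exactly $\ell(u)$. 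The orbit of a closed tuple is determined by its shape and labels.
\item Only countably many types occur in $\mc{A}$. Hence ``$\mc{A}_y$ has type $p$'' is defined inside $\mc{A}$ by a countable conjunction of separating $\Pi_\alpha$ and $\Sigma_\alpha$ sentences. So orbits are $\Pi_{\alpha+1}$-definable, hence $\Sigma_{\alpha+2}$, which gives a $\Pi_{\alpha+3}$ Scott sentence. This definability of types is the real source of the ``$+2$''.
\end{enumerate}

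\textbf{Comparison with the paper.} The repaired argument is more elementary than the paper's and produces a model $\equiv_\alpha$ to any prescribed one. However, it does not recover the paper's sharper intermediate result that every $\E_\alpha$ sentence has a model with a $\Pi_{\alpha+2}$ Scott sentence.
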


\noindent The high-level structure of the argument is similar to the case of linear orders, though of course the details differ. In particular, the reader will note that we obtain a better bound for trees than that for linear orders. While this paper is mathematically self-contained, our explanatory discussion of material that is analogous to that appearing in \cite{HTGonzalez}, especially that in Section \ref{two}, will be brief. We refer the reader to \cite{HTGonzalez} for a more detailed development.

As for linear orders, we also obtain non-trivial lower bounds, but there remains a gap between the lower bounds and the upper bounds.

\begin{thm}
    There is a $\Pi_2$ theory of trees such that no model of $\varphi$ has a $\Sigma_3$ Scott sentence, and hence every model has Scott rank at least $3$.
\end{thm}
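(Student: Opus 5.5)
We need a $\Pi_2$ sentence $\varphi$ of trees such that no model of $\varphi$ has a $\Sigma_3$ Scott sentence. The plan is to adapt the analogous lower-bound example for linear orders from \cite{HTGonzalez} by a tree coding. We use the standard characterization (Montalb\'an \cite{MonSR}): a countable structure $\mc{A}$ has a $\Sigma_3$ Scott sentence if and only if there is a finite tuple $\bar a$ such that $(\mc{A},\bar a)$ has a $\Pi_2$ Scott sentence. The latter holds if and only if every automorphism orbit of $(\mc{A},\bar a)$ is $\Sigma_1$-definable over $\bar a$, equivalently $\Sigma_1$ types over $\bar a$ determine orbits. So it suffices to build $\varphi$ such that for every model $\mc{A}$ and every finite tuple $\bar a$ there are two tuples with the same $\Sigma_1$ type over $\bar a$ lying in different orbits.

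The $\Pi_2$ axioms will say the following.
\begin{enumerate}[(1)]
\item The structure is a tree in which every node has infinitely many children. This is $\Pi_2$: for every $x$ and every $y_1,\dots,y_n$ there is a further child of $x$.
\item For each node, the subtrees above distinct children ``look alike'' at the $\Sigma_1$ level while the tree is forced to be non-homogeneous. Concretely, require that every node $x$ has, for each $n$, a child whose subtree contains a path of length exactly $n$ to a leaf. Also require that every finite configuration that appears above one child appears above infinitely many others.
\end{enumerate}
Every clause in (2) has the form $\forall\exists$ with finitary matrix, so the whole of $\varphi$ is $\Pi_2$.

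The key steps, in order:
\begin{enumerate}[(i)]
\item Check that $\varphi$ is satisfiable. We build a model by a Fra\"iss\'e-style back-and-forth construction.
\item Fix any model $\mc{A}$ and tuple $\bar a$. Pick a node $x$ above $\bar a$, or off the subtrees containing $\bar a$, so that $\bar a$ sees only finitely much below and around $x$.
\item Find two children $c,c'$ of $x$ with the same $\Sigma_1$ type over $\bar a$ but non-isomorphic subtrees. The $\Sigma_1$ type of $c$ over $\bar a$ only records which finite configurations embed above $c$. By the richness axioms, that set can be made the same for $c$ and $c'$. Meanwhile the axiom demanding children with leaves at every finite depth forces some subtree to contain a witness that is $\Pi_1$ in nature, such as ``no node above $c$ has depth greater than $n$,'' or infinite height versus unbounded finite height.
\item Conclude that no $\bar a$ works, so no model has a $\Sigma_3$ Scott sentence. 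Since a Scott sentence that is $\Pi_3$ would also yield, over the empty tuple, a $\Pi_2$-based characterization, this also gives Scott rank at least $3$.
\end{enumerate}

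The main obstacle is step (iii). It requires designing the $\Pi_2$ richness axiom so that, in \emph{every} model rather than just a generic one, there are two children with the same existential type but non-isomorphic subtrees. The difficulty is that a model could try to make all children above each node isomorphic, in which case orbits would be simple. The first attempt is to force, by a $\Pi_2$ clause, children whose subtrees have every finite height but are well-founded, alongside children with an infinite branch. Existentially these cannot be told apart: both contain arbitrarily long finite chains. However, they are not automorphic, since one has an infinite path and the other does not.

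The honest check needed is that ``has an infinite branch above some child'' can be forced in all models by a $\Pi_2$ axiom. I would try ``every node has a child $y$ with a child in which the same holds,'' arranged via a unary-free coding such as parity of the number of children. If this fails, I would fall back to transferring the linear-order example of \cite{HTGonzalez} through a tree coding of linear orders that preserves $\Sigma_1$ types up to a bounded shift.
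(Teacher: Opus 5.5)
There is a genuine gap. The proposal never produces a sentence that works, and the step you call the main obstacle is the whole content of the theorem.

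\textbf{The axioms are inconsistent and not $\Pi_2$.}
\begin{itemize}
\item Axiom (1) says every node has infinitely many children, so there are no leaves. This contradicts (2), which requires paths of every length $n$ ending in a leaf.
\item Being a leaf is a $\Pi_1$ property ($\forall z\,\neg P(z)=y$). So ``some child has a path of length exactly $n$ to a leaf'' is $\Sigma_2$, and that clause is $\Pi_3$, not $\forall\exists$ with finitary matrix.
\item Axiom (1) puts an infinite branch above every node. So the well-founded subtrees you want to contrast with ill-founded ones in step (iii) cannot exist.
\item Axiom (1) alone is satisfied by the tree in which every node has $\aleph_0$ children, and that tree has a $\Pi_2$ Scott sentence. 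Everything therefore rests on a richness axiom forcing non-homogeneity in \emph{every} model. No working axiom of this kind is given, and the fallback through linear orders is only named, not carried out.
\end{itemize}

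\textbf{The missing idea.} The paper does not look for two tuples with the same $\Sigma_1$ type. Instead it arranges that no single $\Sigma_1$ formula isolates an orbit. It works with coloured trees (unary predicates $R_i$), transferred to plain trees by a uniform $\Sigma_1$ bi-interpretation that codes colours by attached branches. The sentence $\varphi$ says:
\begin{itemize}
\item the tree has height $1$ and the root has infinitely many children;
\item distinct children differ on some colour ($\Pi_1$);
\item every finite colour pattern $\sigma\in 2^{<\omega}$ is realized ($\Pi_2$).
\end{itemize}
Every model is rigid, and each element is $\Pi_1$-definable by its full colour pattern. But a $\Sigma_1$ formula $\exists\bar y\,\psi(x,\bar y,\bar p)$ mentions only finitely many colours. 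So another child $b'$ with the same finite pattern, chosen to avoid $\bar p$ and the witnesses, satisfies it too. Hence for every $\bar p$ some orbit is not $\Sigma_1$-definable over $\bar p$, and Montalb\'an's characterization rules out a $\Sigma_3$ Scott sentence.

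\textbf{Two further errors.}
\begin{itemize}
\item In these models quantifier-free types already determine orbits. So your claim that $\Sigma_1$-definability of orbits is \emph{equivalent} to $\Sigma_1$ types determining orbits is false. Only the direction you use holds, and restricting yourself to it excludes exactly this kind of example.
\item Step (iv) does not follow. A $\Pi_3$ Scott sentence yields no $\Pi_2$ characterization. Lacking a $\Sigma_3$ Scott sentence only excludes $\Pi_2$ Scott sentences, and the paper's own models do have $\Pi_3$ Scott sentences. Under Montalb\'an's convention, the numerical consequence that actually follows is Scott rank at least $2$.
\end{itemize}
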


\section{Back-and-forth relations and hierarchies of formulas}\label{two}

In this section we outline the general background needed to carry out the argument. Throughout this paper, all structures will be countable and all langauges relational. Central to the ideas of the argument are the asymmetric back-and-forth relations.

\begin{defn}\label{def:bfasym}
	  The asymmrtric back-and-forth relations $\leq_\alpha$, for $\alpha < \omega_1$, are defined by:
		\begin{itemize}
			\item $(\mc{M},\bar{a}) \leq_0 (\mc{N},\bar{b})$ if $\bar{a}$ and $\bar{b}$ satisfy the same quantifier-free formulas from among the first $|\bar{a}|$-many formulas.
			\item For $\alpha > 0$, $(\mc{M},\bar{a}) \leq_\alpha (\mc{N},\bar{b})$ if for each $\beta < \alpha$ and $\bar{d} \in \mc{N}$ there is $\bar{c} \in \mc{M}$ such that $(\mc{N},\bar{b} \bar{d}) \leq_\beta (\mc{M},\bar{a} \bar{c})$.
		\end{itemize}
		We define $\bar{a} \equiv_\alpha \bar{b}$ if $\bar{a} \leq_\alpha \bar{b}$ and $\bar{b} \leq_\alpha \bar{a}$.
	\end{defn}

    \noindent The interpretation of $(\mc{M},\bar{a}) \leq_\alpha (\mc{N},\bar{b})$ is that in the back-and-forth game between $\mc{M}$ and $\mc{N}$, starting with the partial isomorphism $\bar{a} \mapsto \bar{b}$ and with the first player \textsf{Spoiler} to play next in $\mc{N}$, the second player \textsf{Duplicator} can play without losing along an ordinal clock $\alpha$. Recall that if \textsf{Duplicator} can continue playing forever, then $\mc{M} \cong \mc{N}$.

    The asymmetric back-and-forth relations can be characterized in relation to the quantifier complexity hierarchy $\Sigma_\alpha$/$\Pi_\alpha$ of $\mc{L}_{\omega_1 \omega}$-formulas.
    
    \begin{thm}[Karp \cite{Karp}]\label{thm:Karp}
	For any $\alpha \geq 1$, structures $\mc{A}$ and $\mc{B}$, and tuples $\bar{a}\in\mc{A}$ and $\bar{b}\in\mc{B}$, the following are equivalent:
	\begin{enumerate}
		\item $(\mc{A},\bar{a})\leq_\alpha (\mc{B},\bar{b})$.
		\item Every $\Pi_\alpha$ formula true about $\bar{a}$ in $\mc{A}$ is true about $\bar{b}$ in $\mc{B}$.
		\item Every $\Sigma_\alpha$ formula true about $\bar{b}$ in $\mc{B}$ is true about $\bar{a}$ in $\mc{A}$.
	\end{enumerate} 
    \end{thm}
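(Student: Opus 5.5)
We argue by simultaneous induction on $\alpha \geq 1$, proving (1)$\Leftrightarrow$(2) and (1)$\Leftrightarrow$(3) together. Formulas are infinitary in normal form: a $\Sigma_\alpha$ formula is a countable disjunction of formulas $\exists \bar{y}\,\psi(\bar{x},\bar{y})$ with each $\psi$ in $\Pi_\beta$ for some $\beta<\alpha$, and $\Pi_\alpha$ formulas are defined dually. Negation exchanges $\Sigma_\alpha$ and $\Pi_\alpha$, so (2)$\Leftrightarrow$(3) is immediate: $\bar{b}$ satisfies a $\Sigma_\alpha$ formula $\psi$ but $\bar{a}$ does not exactly when $\neg\psi$ is a $\Pi_\alpha$ formula true of $\bar{a}$ and false of $\bar{b}$. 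It therefore suffices to prove (1)$\Leftrightarrow$(3). At stage $\alpha$ we may use the equivalence at every $1\le\beta<\alpha$, in both directions of the relation. For $\beta=0$ we use the base definition: $\leq_0$ on tuples of length $n$ amounts to agreement on the first $n$ atomic formulas, which plays the role of $\Pi_0=\Sigma_0$ (finitary quantifier-free formulas under a fixed enumeration).

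\textbf{(1)$\Rightarrow$(3).} Suppose $(\mc{A},\bar{a})\leq_\alpha(\mc{B},\bar{b})$ and $\mc{B}\models\psi(\bar{b})$ with $\psi\in\Sigma_\alpha$. Then some disjunct $\exists\bar{y}\,\theta(\bar{x},\bar{y})$ holds of $\bar{b}$, with $\theta\in\Pi_\beta$ for some $\beta<\alpha$. Pick a witness $\bar{d}\in\mc{B}$. By the definition of $\leq_\alpha$ there is $\bar{c}\in\mc{A}$ with $(\mc{B},\bar{b}\bar{d})\leq_\beta(\mc{A},\bar{a}\bar{c})$. By the induction hypothesis, direction (1)$\Rightarrow$(2) at level $\beta$, every $\Pi_\beta$ formula true of $\bar{b}\bar{d}$ is true of $\bar{a}\bar{c}$. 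Hence $\mc{A}\models\theta(\bar{a},\bar{c})$, so $\bar{a}$ satisfies $\psi$. For $\beta=0$ the same argument works, provided the quantifier-free part is placed among the formulas checked by $\leq_0$, for instance by padding $\bar{y}$ with dummy variables.

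\textbf{(3)$\Rightarrow$(1).} We argue by contrapositive. Suppose $(\mc{A},\bar{a})\not\leq_\alpha(\mc{B},\bar{b})$. Then there are $\beta<\alpha$ and $\bar{d}\in\mc{B}$ such that for every $\bar{c}\in\mc{A}$ of length $|\bar{d}|$ we have $(\mc{B},\bar{b}\bar{d})\not\leq_\beta(\mc{A},\bar{a}\bar{c})$. By the induction hypothesis at level $\beta$, direction (2)$\Rightarrow$(1), each such $\bar{c}$ yields a $\Pi_\beta$ formula $\theta_{\bar{c}}(\bar{x},\bar{y})$ with $\mc{B}\models\theta_{\bar{c}}(\bar{b},\bar{d})$ and $\mc{A}\models\neg\theta_{\bar{c}}(\bar{a},\bar{c})$. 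Since $\mc{A}$ is countable, there are only countably many $\bar{c}$, so $\Theta=\bigwedge_{\bar{c}}\theta_{\bar{c}}$ is a legitimate $\Pi_\beta$ formula. Then $\exists\bar{y}\,\Theta$ is $\Sigma_{\beta+1}\subseteq\Sigma_\alpha$, it is true of $\bar{b}$ (witnessed by $\bar{d}$), and it is false of $\bar{a}$, since any witness $\bar{c}$ would have to satisfy $\theta_{\bar{c}}$. This contradicts (3). When $\beta=0$, failure of $\leq_0$ gives a single distinguishing quantifier-free literal for each $\bar{c}$. Because that literal ranges over finitely many formulas depending only on the tuple length, we again get a $\Sigma_1$ formula.

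The main obstacle is the bookkeeping at the bottom level and in the asymmetric alternation. The relation $\leq_\beta$ in the definition is reversed, with $\mc{B}$ on the left, so level $\beta$ must be applied with the roles of the two structures swapped. We also have to check that the conventions for $\leq_0$, namely the first $|\bar{a}|$ quantifier-free formulas, match what $\Pi_0$ means in the formula hierarchy. The countability of the structures is exactly what makes the conjunction $\Theta$ admissible in $\mc{L}_{\omega_1\omega}$.
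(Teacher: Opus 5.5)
Your proof is correct. It is the standard simultaneous induction on $\alpha$: for (1)$\Rightarrow$(3) you transfer a witness of a $\Sigma_\alpha$ disjunct through the definition of $\leq_\alpha$, and for (3)$\Rightarrow$(1) you use countability of $\mathcal{A}$ to form the conjunction $\bigwedge_{\bar c}\theta_{\bar c}$. The level-$0$ bookkeeping is also handled properly, by padding the tuple so the relevant atomic formulas are among those checked by $\leq_0$ and by taking a finite conjunction of literals. The paper gives no proof of this theorem, only quoting it from Karp as background, so there is no alternative route to compare; your argument is the usual one in the literature, for instance Montalb\'an's treatment.
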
 

    \noindent However, as shown in \cite{HTGonzalez}, there is no $\Pi_\alpha$ formula defining the set of $\bar{b} \in \mc{B}$ such that $(\mc{A},\bar{a}) \leq_\alpha (\mc{B},\bar{b})$. Instead, as there, we will make use of the hierarchies $\ma_\alpha$ and $\E_\alpha$ as introduced in \cite{CGHT}.

    \begin{defn}
		All connectives below are countable.
		\begin{itemize}
			\item  $\ma_1 := \Pi_1$
			\item $\E_1 := \Sigma_1$
			\item  $\ma_\alpha :=$ closure of $\bigcup_{\beta < \alpha} \vwE_\beta$ under $\forall$ and $\bigdoublewedge$
			\item  $\E_\alpha :=$ closure of $\bigcup_{\beta < \alpha} \vwA_{\beta}$ under $\exists$ and $\bigdoublevee$
			\item  $\vwE_\alpha :=$ closure of $\E_\alpha$ under $\bigdoublevee, \bigdoublewedge$
			\item  $\vwA_\alpha :=$ closure of $\ma_\alpha$ under $\bigdoublevee, \bigdoublewedge$
		\end{itemize}
		Each $\ma_\alpha$ formula can be written in the form
		\[ \bigdoublewedge_{i \in I} \forall \bar{y}_i \varphi_i(\bar{x},\bar{y}_i)\]
		where the formulas $\varphi_i$ are $\vwE_\beta$ for some $\beta < \alpha$, and similarly a $\E_\alpha$ formula can be written in the form
		\[ \bigdoublevee_{i \in I} \exists \bar{y}_i \varphi_i(\bar{x},\bar{y}_i)\]
		where each $\varphi_i$ is $\vwA_\beta$ for some $\beta < \alpha$.
	\end{defn}
    
    \noindent The key difference between these two hierarchies is that a formula of the form $\forall \bigdoublevee \bigdoublewedge \exists$ is only $\ma_2$ while being $\Pi_4$. The two mains facts we use are stated in the following lemmas.

    \begin{lem}[Chen, Gonzalez, and Harrison-Trainor \cite{CGHT}]\label{cor:karp}
	For any $\alpha \geq 1$, structures $\mc{A}$ and $\mc{B}$, and tuples $\bar{a}\in\mc{A}$ and $\bar{b}\in\mc{B}$, the following are equivalent:
	\begin{enumerate}
		\item $(\mc{A},\bar{a})\leq_\alpha (\mc{B},\bar{b})$.
		\item Every $\ma_\alpha$ formula true about $\bar{a}$ in $\mc{A}$ is true about $\bar{b}$ in $\mc{B}$.
		\item Every $\E_\alpha$ formula true about $\bar{b}$ in $\mc{B}$ is true about $\bar{a}$ in $\mc{A}$.
	\end{enumerate} 
\end{lem}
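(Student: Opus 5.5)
We prove the equivalence of (1), (2), (3) by simultaneous induction on $\alpha \geq 1$, leaning on Karp's Theorem \ref{thm:Karp} wherever possible. The implications (2)$\Rightarrow$(1) and (3)$\Rightarrow$(1) are immediate from Karp's theorem once we note that $\Pi_\alpha \subseteq \ma_\alpha$ and $\Sigma_\alpha \subseteq \E_\alpha$ (up to logical equivalence). This inclusion is checked by induction: a $\Pi_\alpha$ formula is a countable conjunction of $\forall \bar y\,\psi$ with $\psi \in \Sigma_\beta$, $\beta<\alpha$. Inductively $\psi \in \E_\beta \subseteq \vwE_\beta$, so the whole formula lies in $\ma_\alpha$; the case of $\Sigma_\alpha$ is dual. (At $\alpha=1$ the classes coincide by definition.) Hence if every $\ma_\alpha$ formula transfers, in particular every $\Pi_\alpha$ formula does, and Karp gives (1).

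The content is (1)$\Rightarrow$(2) and (1)$\Rightarrow$(3). We prove, by induction on $\alpha$, the stronger statement: if $(\mc{A},\bar a)\leq_\alpha(\mc{B},\bar b)$, then every $\vwA_\alpha$ formula true of $\bar a$ is true of $\bar b$, and every $\vwE_\alpha$ formula true of $\bar b$ is true of $\bar a$. The base case $\alpha=1$ follows from Karp, since $\ma_1=\Pi_1$ and the closure under $\bigdoublewedge,\bigdoublevee$ is harmless. Indeed, a Boolean combination built by countable $\bigdoublewedge$ and $\bigdoublevee$ from formulas each of which transfers upward (from $\bar a$ to $\bar b$) again transfers upward, by a trivial induction on the formation of the formula. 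This observation reduces the bar-classes to the unbarred classes at every level.

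For the inductive step, let $\vp = \bigdoublewedge_i \forall \bar y_i\, \vp_i(\bar x,\bar y_i)$ with $\vp_i \in \vwE_{\beta_i}$, $\beta_i<\alpha$, and suppose $\mc{A}\models\vp(\bar a)$. Fix $i$ and $\bar d\in\mc{B}$. By the definition of $\leq_\alpha$ there is $\bar c\in\mc{A}$ with $(\mc{B},\bar b\bar d)\leq_{\beta_i}(\mc{A},\bar a\bar c)$. Since $\mc{A}\models\vp_i(\bar a,\bar c)$ and $\vp_i\in\vwE_{\beta_i}$, the induction hypothesis at $\beta_i$ (the $\vwE$ clause, applied with the roles of the structures swapped) gives $\mc{B}\models\vp_i(\bar b,\bar d)$. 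Hence $\mc{B}\models\vp(\bar b)$. The $\E_\alpha$ clause is symmetric: if $\mc{B}\models\exists\bar y\,\vp_i(\bar b,\bar y)$ with witness $\bar d$ and $\vp_i\in\vwA_{\beta}$, choose $\bar c$ as above, and the $\vwA_\beta$ clause of the induction hypothesis moves truth from $\bar b\bar d$ to $\bar a\bar c$.

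The only delicate point is bookkeeping. We must carry both clauses, for $\vwA$ and $\vwE$, through the induction simultaneously, since each step uses the opposite clause at lower levels with the structures swapped. We must also make sure the relation at level $\beta$ is obtained for arbitrary $\beta<\alpha$, including at limit $\alpha$. The definition of $\leq_\alpha$ quantifies over all $\beta<\alpha$, so this is automatic. Finally, (2)$\Leftrightarrow$(3) follows directly, since negation swaps $\ma_\alpha$ and $\E_\alpha$ up to equivalence, by pushing negations inward.
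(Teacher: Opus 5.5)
Your proof is correct. You get (2)$\Rightarrow$(1) and (3)$\Rightarrow$(1) from Karp's theorem via $\Pi_\alpha\subseteq\ma_\alpha$ and $\Sigma_\alpha\subseteq\E_\alpha$. You get the forward directions from a simultaneous induction on the classes $\vwA_\beta,\vwE_\beta$, in which the roles of $\mc{A}$ and $\mc{B}$ swap at each level and positive Boolean closure costs nothing. Together these four implications establish the lemma.

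The paper gives no proof of its own here; it simply quotes the lemma from Chen, Gonzalez, and Harrison-Trainor. So there is nothing to compare against, and what you have is a standard self-contained Karp-style argument.

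The details you leave implicit are routine:
\begin{enumerate}
\item A conjunct $\forall\bar y_i\,\vp_i$ with empty $\bar y_i$ is handled by adding a dummy variable.
\item A $\Sigma_0$ subformula in the inclusion check sits in $\E_1$.
\end{enumerate}
Your closing remark about negation swapping $\ma_\alpha$ and $\E_\alpha$ is correct but redundant, since the four implications already give the equivalence.
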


\begin{lem}[Chen, Gonzalez, and Harrison-Trainor \cite{CGHT}]\label{2.6}
    Let $\mathcal{L}$ be a countable language and $\mc{A}$ be a countable $\mathcal{L}$-structure. For each $\bar{a}\in A$ and nonzero $\alpha<\omega_1$ there is an $\ma_\alpha$ formula $\psi_{\bar{a},\mc{A},\alpha}$ such that for any countable $\mathcal{L}$-structure $\mc{B}$ and $\bar{b}\in \mc{B}$,
    \[ \mc{B} \models\psi_{\bar{a},\mc{A},\alpha}(\bar{b})\Longleftrightarrow (\mc{B},\bar{b})\geq_\alpha(\mc{A},\bar{a}).\]
\end{lem}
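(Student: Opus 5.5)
We build $\psi_{\bar{a},\mc{A},\alpha}$ by transfinite recursion on $\alpha$, simultaneously with a dual family of formulas for the opposite direction of the relation. For each $\bar{a}\in\mc{A}$ and $\alpha\geq 1$ we will construct $\psi_{\bar{a},\mc{A},\alpha}\in\ma_\alpha$ with
\[ \mc{B}\models\psi_{\bar{a},\mc{A},\alpha}(\bar{b}) \iff (\mc{B},\bar{b})\geq_\alpha(\mc{A},\bar{a}), \]
which is the statement of the lemma. Alongside it we construct $\theta_{\bar{a},\mc{A},\alpha}\in\E_\alpha$ with
\[ \mc{B}\models\theta_{\bar{a},\mc{A},\alpha}(\bar{b}) \iff (\mc{B},\bar{b})\leq_\alpha(\mc{A},\bar{a}). \]

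At level $1$ we unfold the definitions. We have $(\mc{A},\bar{a})\leq_1(\mc{B},\bar{b})$ iff for every $\bar{d}\in\mc{B}$ there is $\bar{c}\in\mc{A}$ with $\bar{b}\bar{d}\leq_0\bar{a}\bar{c}$. The relation $\leq_0$ on a tuple of length $n$ only involves the first $n$ quantifier-free formulas. So we let $\chi_{\bar{a}\bar{c}}$ be the finite conjunction of those formulas, or their negations, that $\bar{a}\bar{c}$ satisfies, and put
\[ \psi_{\bar{a},\mc{A},1}(\bar{x})=\bigdoublewedge_{n}\forall \bar{y}\,{\textstyle\bigvee}_{\bar{c}\in\mc{A}^n}\chi_{\bar{a}\bar{c}}(\bar{x},\bar{y}). \]
For fixed $n$ the disjunction ranges over only finitely many distinct quantifier-free types, so it is finitary and the formula is $\Pi_1=\ma_1$. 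Dually, we put
\[ \theta_{\bar{a},\mc{A},1}=\bigdoublevee_{\bar{c}}\exists\bar{y}\,\bigwedge(\ldots), \]
where for each $\bar{c}$ the finitary conjunction expresses that $\bar{x}\bar{y}$ satisfies $\chi_{\bar{a}\bar{c}}$ and is universally quantifier-free correct. Some care with the "first $|\bar{a}|$ formulas" convention is needed, but this is routine.

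For the successor and limit steps we take $\alpha>1$. By definition, $(\mc{B},\bar{b})\geq_\alpha(\mc{A},\bar{a})$ iff for each $\beta<\alpha$ and each $\bar{d}\in\mc{B}$ there is $\bar{c}\in\mc{A}$ with $(\mc{B},\bar{b}\bar{d})\leq_\beta(\mc{A},\bar{a}\bar{c})$. We therefore set
\[ \psi_{\bar{a},\mc{A},\alpha}(\bar{x})=\bigdoublewedge_{\beta<\alpha}\bigdoublewedge_{n}\forall\bar{y}\bigdoublevee_{\bar{c}\in\mc{A}^n}\theta_{\bar{a}\bar{c},\mc{A},\beta}(\bar{x},\bar{y}). \]
The disjunction is countable because $\mc{A}$ is countable. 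Each $\theta_{\bar{a}\bar{c},\mc{A},\beta}$ is $\E_\beta$, so the disjunction is $\vwE_\beta$. Applying $\forall$ and $\bigdoublewedge$ then lands in $\ma_\alpha$; this is exactly why the hierarchy uses $\vwE$ rather than $\E$.

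Symmetrically, $(\mc{B},\bar{b})\leq_\alpha(\mc{A},\bar{a})$ iff for each $\beta<\alpha$ and each $\bar{c}\in\mc{A}$ there is $\bar{d}\in\mc{B}$ with $(\mc{A},\bar{a}\bar{c})\leq_\beta(\mc{B},\bar{b}\bar{d})$, that is, with $\mc{B}\models\psi_{\bar{a}\bar{c},\mc{A},\beta}(\bar{b}\bar{d})$. We therefore set
\[ \theta_{\bar{a},\mc{A},\alpha}(\bar{x})=\bigdoublewedge_{\beta<\alpha}\bigdoublewedge_{\bar{c}}\exists\bar{y}\,\psi_{\bar{a}\bar{c},\mc{A},\beta}(\bar{x},\bar{y}). \]

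The main obstacle is that this $\theta$ is a conjunction of existentials, whereas $\E_\alpha$ is only closed under $\exists$ and $\bigdoublevee$. It is, however, $\vwE_\alpha$, since each $\exists\bar{y}\,\psi_{\bar{a}\bar{c},\mc{A},\beta}$ is $\E_\alpha$ and $\vwE_\alpha$ is closed under $\bigdoublewedge$. So we weaken the dual claim to $\theta\in\vwE_\alpha$. This suffices for the recursion: in the clause for $\psi_{\cdot,\alpha}$ we only use $\theta_{\cdot,\beta}$ for $\beta<\alpha$, and $\bigdoublevee$ of $\vwE_\beta$ formulas stays in $\vwE_\beta$, so $\psi_{\bar{a},\mc{A},\alpha}$ is still in $\ma_\alpha$.

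The correctness of each equivalence is a direct induction on $\alpha$ using the definition of $\leq_\alpha$. No use of Karp's theorem is needed, and countability of $\mc{A}$ is used only to keep the connectives countable. The base case must also be handled under the weakened claim: we take $\theta_{\cdot,1}$ to be a $\Sigma_1$-style formula, which lies in $\vwE_1$.
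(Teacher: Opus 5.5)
Your approach is a simultaneous transfinite recursion. It produces an $\ma_\alpha$ formula for $\geq_\alpha$ and, for $\leq_\alpha$, a countable conjunction of $\E_\alpha$ formulas (hence only $\vwE_\alpha$). This is the standard argument for this lemma; the paper itself quotes the lemma from Chen, Gonzalez, and Harrison-Trainor without proof. The following parts of your proposal are correct:
\begin{itemize}
\item your clauses for $\psi_{\bar a,\mc{A},\alpha}$ and $\theta_{\bar a,\mc{A},\alpha}$ at $\alpha>1$, and their correctness by directly unfolding $\leq_\alpha$;
\item the complexity bookkeeping, including the use of $\vwE_\beta$ under $\forall$;
\item your base case $\psi_{\bar a,\mc{A},1}$.
\end{itemize}
Passing to $\vwE_\alpha$ for $\theta$ is not a concession; it is necessary, as the example below shows already at level $1$.

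The one step that fails as written is the base case for $\theta$. Your displayed $\theta_{\bar a,\mc{A},1}=\bigdoublevee_{\bar c}\exists\bar y\,\bigwedge(\ldots)$ is a disjunction over $\bar c$. But $(\mc{B},\bar b)\leq_1(\mc{A},\bar a)$ requires that \emph{every} $\bar c\in\mc{A}$ be matched by some $\bar d\in\mc{B}$, and no $\Sigma_1$ formula can express this in general. Take $\bar a$ empty and $\mc{A}$ an infinite set in the language of pure equality. Then $\mc{B}\leq_1\mc{A}$ says exactly that $\mc{B}$ is infinite. However, any $\Sigma_1$ sentence true in a structure is already true in the finite substructure on its witnesses. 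The vague extra conjunct (``universally quantifier-free correct'') cannot rescue a $\Sigma_1$ form. Since $\psi_{\bar a,\mc{A},2}$ is built from $\theta_{\cdot,1}$, the error would propagate to all higher levels.

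The repair is to set $\psi_{\bar a,\mc{A},0}=\theta_{\bar a,\mc{A},0}:=\chi_{\bar a}$ and let your general $\theta$-clause govern $\alpha=1$ as well. This yields
\[ \theta_{\bar a,\mc{A},1}(\bar x)=\bigdoublewedge_{\bar c}\exists\bar y\,\chi_{\bar a\bar c}(\bar x,\bar y), \]
a countable conjunction of $\Sigma_1$ formulas. It lies in $\vwE_1$, as your weakened claim requires, but not in $\Sigma_1$. Making the level-$0$ formulas explicit also supplies the $\beta=0$ terms in your clauses for $\alpha\geq 2$, which you never defined. Being quantifier-free, they lie in $\vwE_1$ and $\vwA_1$, so the complexity bounds are unaffected. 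With this change the proof is complete.
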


\section{Back-and-forth relations and trees}

A common assumption when working with back-and-forth relations and trees is that whenever a tuple $\bar{a}$ of a tree $T$ appears, it is closed under the parent relation and so is a subtree of $T$. We will make this assumption, which we will often abbreviate by saying that ``$\bar{a}$ is a finite subtree of $T$''. In the context of this paper, this assumption can be justified by the following lemma.

\begin{lem}
Let $S$ and $T$ be trees and $\bar{a} \in S$ and $\bar{b}\in T$. Let $\bar{a}' \in S$ consist of $\bar{a}$ and all of its ancestors, and similarly for $\bar{b}'$ and $\bar{b}$ in $T$, with both tuples listed in corresponding order. 
Then for any $\alpha>0$, $(S,\bar{a}) \leq_{\alpha} (T,\bar{b})$ if and only if $(S,\bar{a}') \leq_{\alpha} (T,\bar{b}')$.
\end{lem}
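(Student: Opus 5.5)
The plan is to avoid working with the back-and-forth game directly and instead go through Karp's theorem (Theorem~\ref{thm:Karp}). Since $\alpha \geq 1$, $(S,\bar{a}) \leq_\alpha (T,\bar{b})$ is equivalent to every $\Pi_\alpha$ formula true of $\bar{a}$ in $S$ being true of $\bar{b}$ in $T$. The same holds for $\bar{a}',\bar{b}'$. So the lemma reduces to translating $\Pi_\alpha$ formulas about $\bar{a}$ into $\Pi_\alpha$ formulas about $\bar{a}'$ and back. We write $\bar{x}$ for the variables of $\bar{a}$ and $\bar{x}\bar{y}$ for those of $\bar{a}'$, where $\bar{y}$ lists the extra ancestors in the fixed corresponding order.

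The direction ($\Leftarrow$) is immediate. A $\Pi_\alpha$ formula $\varphi(\bar{x})$ true of $\bar{a}$ is also a $\Pi_\alpha$ formula in the variables $\bar{x}\bar{y}$ that does not mention $\bar{y}$, and it is true of $\bar{a}'$. By hypothesis it is true of $\bar{b}'$, and hence of $\bar{b}$.

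For ($\Rightarrow$) the main work is to pin down the ancestors of $\bar{b}$ by a universal formula. Let $\mathrm{Anc}(\bar{x},\bar{y})$ be the finite conjunction of atomic and negated atomic formulas saying three things:
\begin{enumerate}
\item each $x_i$ has its chain of parents running through the designated coordinates of $\bar{y}$;
\item each chain ends in the root $r$ after exactly the depth that $a_i$ has in $S$;
\item the equalities and inequalities among all these coordinates are as in $\bar{a}'$.
\end{enumerate}
This is quantifier-free. Consider the $\Pi_1$ formula
\[ \chi(\bar{x}) := \forall \bar{z}\,\bigl(\text{``$\bar{z}$ is a parent chain from the $x_i$ of the lengths above''} \rightarrow \mathrm{Anc}(\bar{x},\bar{z})\bigr). \]
It is true of $\bar{a}$, and since $\alpha \ge 1$ it transfers to $\bar{b}$.

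Every non-root node of $T$ has a (unique) parent, and $T$ is a tree, so from each $b_i$ a parent chain of the required length exists. Then $\chi$ forces this chain to end at $r$ with exactly the right identifications. Hence the ancestors of $\bar{b}$, listed in the corresponding order, form a tuple $\bar{b}'$ with $T \models \mathrm{Anc}(\bar{b},\bar{b}'_{\bar{y}})$. Moreover, because parents are unique, $\bar{b}'$ is the only tuple $\bar{y}$ with $T\models \mathrm{Anc}(\bar{b},\bar{y})$. I expect this step to be the only delicate point. Its purpose is to check that "corresponding order" is well defined and is forced by $\leq_1$, and the step needs the tree axioms (totality and uniqueness of parents) rather than just the relational language.

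Now let $\psi(\bar{x},\bar{y})$ be $\Pi_\alpha$ and true of $\bar{a}'$. Put
\[ \theta(\bar{x}) := \forall \bar{y}\,\bigl(\mathrm{Anc}(\bar{x},\bar{y}) \rightarrow \psi(\bar{x},\bar{y})\bigr). \]
This is $\Pi_\alpha$, since $\alpha\geq 1$ and $\mathrm{Anc}$ is quantifier-free. It is true of $\bar{a}$ because $\bar{a}'$ is the unique witness to $\mathrm{Anc}$ in $S$. Hence $\theta$ is true of $\bar{b}$, and by the uniqueness just established $T\models\psi(\bar{b}')$. By Theorem~\ref{thm:Karp}, $(S,\bar{a}')\leq_\alpha(T,\bar{b}')$.
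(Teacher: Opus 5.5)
Your strategy is sound, and it is the dual of the paper's. The paper works on the $\Sigma_\alpha$ side of Karp's theorem. It reads a quantifier-free definition $\eta(\bar y,\bar b)$ of $\bar b'$ off from $T$ and transfers $\exists\bar y\,[\eta(\bar y,\bar b)\wedge\psi'(\bar y)]$ from $T$ to $S$; uniqueness of parents in $S$ then forces the witness to be $\bar a'$. You work on the $\Pi_\alpha$ side with the guard $\forall\bar y\,(\mathrm{Anc}\to\psi)$, and you read $\mathrm{Anc}$ off from $S$. Your ($\Leftarrow$) direction, the uniqueness of the witness in $S$, and the complexity count for $\theta$ are all fine. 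The cost of reading $\mathrm{Anc}$ off from $S$ is that you must separately prove $T\models\mathrm{Anc}(\bar b,\bar b')$. That is exactly the step you flagged as delicate, and it is where the argument has a gap.

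The claim ``from each $b_i$ a parent chain of the required length exists'' is not justified. Non-root nodes having parents gives a chain of length $k_i$ (the depth of $a_i$) from $b_i$ only if $b_i$ has depth at least $k_i$. If $b_i$ has depth $m<k_i$, the chain reaches $r$ after $m$ steps and cannot be extended, since $r$ has no parent. In that case the antecedent of $\chi$ is never satisfied in $T$, so $\chi(\bar b)$ holds vacuously and tells you nothing. Then $\bar b'$ does not satisfy $\mathrm{Anc}(\bar b,\cdot)$, the ``corresponding order'' is not even defined, and $\theta(\bar b)$ says nothing about $\psi(\bar b')$. Your $\chi$ rules out $b_i$ being too deep or having the wrong meets, but it does not rule out $b_i$ being too shallow.

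The repair is easy. Add to $\chi$, for each $i$ and each $j<k_i$, the universal clause that any parent chain of length $j$ from $x_i$ does not end at $r$. This is still $\Pi_1$, true of $\bar a$, and with it $\chi(\bar b)$ forces the depth of $b_i$ to be exactly $k_i$. Alternatively, use the paper's $\Sigma_\alpha$ formulation, which needs no $\chi$ at all: there the existence of a witness in $S$ comes out of the transfer itself, and only uniqueness of parents, which holds in any tree, is used.
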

\begin{proof}
If $(S,\bar{a}') \leq_{\alpha} (T,\bar{b}')$ then $(S,\bar{a}) \leq_{\alpha} (T,\bar{b})$ as $\bar{a}$ and $\bar{b}$ are subtuples of $\bar{a}'$ and $\bar{b}'$ respectively. On the other hand, suppose that $(S,\bar{a}) \leq_{\alpha} (T,\bar{b})$. 
We know that if $T\models\psi(\bar{b})$ then $T \models \psi(\bar{a})$ for any $\Sigma_\alpha$ formula $\psi(\bar{x})$. In particular, if $\eta(\bar{y},\bar{b})$ is a finitary quantifier-free formula defining $\bar{b}'$ with respect to $\bar{b}$, then $T\models\eta(\bar{a}',\bar{a})$ and moreover $\bar{a}'$ is the only such tuple. We must argue that if $T\models\psi'(\bar{b}')$ then $S \models \psi'(\bar{a}')$ for any $\Sigma_\alpha$ formula $\psi'(\bar{x}')$. Suppose $T\models\psi'(\bar{b}')$. Then $T\models\exists\bar{y}\; [\eta(\bar{y},\bar{b})\land\psi'(\bar{y})]$ so $S\models\exists\bar{y} \; [\eta(\bar{y},\bar{a})\land\psi'(\bar{y})]$. Since $\eta(\bar{y},\bar{a})$ is a defining formula for $\bar{a}'$, we see $T\models\psi'(\bar{a}')$.
\end{proof}

For linear orders, the analysis in \cite{HTGonzalez} breaks the linear order up into intervals. The point was that the back-and-forth relations on linear orders can be broken up into the back-and-forth relations on intervals as follows. Given linear orders $\mc{A}$ and $\mc{B}$, and elements $a_1 < \cdots < a_n$ and $b_1 < \cdots < b_n$, we have
$(\bar{a},\mc{A}) \leq_\alpha (\bar{b},\mc{B})$ if and only if each of the corresponding intervals in $\mc{A}$ and $\mc{B}$ are related in the same way, that is $(-\infty,a_1) \leq_\alpha (-\infty,b_1)$, $(a_1,a_2) \leq_\alpha (b_1,b_2)$, and so on. We must identify an analogous way of breaking trees up into their constituent parts. We do this as follows.

\begin{defn}
    Let $\bar{a} = (a_1,\ldots,a_n)$ be a finite subtree of $T$. Define $T^{a_i}_{\bar{a}}$, the \emph{descendant tree of $a_i$ avoiding $\bar{a}$}, to be the tree consisting of $a_i$ and its descendants that are not in $\bar{a}$, with $a_i$ as its root.

\BeginAccSupp{method=escape,ActualText={
Tree diagram demonstrating the definition of the descendant tree of a subscript i avoiding the tuple a.
}}
        \[ \begin{tikzpicture}[
    blacknode/.style={circle, draw=black, fill=black,
                      inner sep=0pt, minimum size=2mm},
    whitenode/.style={circle, draw=black, fill=white,
                      inner sep=0pt, minimum size=2mm}
]

\usetikzlibrary{fit}
\pgfdeclarelayer{labels}
\pgfsetlayers{main,labels}
\node[blacknode, label=above:$a_0$] at (0,10) (a0) {};
\node[blacknode, label=left:$a_1$] at (-3,9) (a1) {};
\node[blacknode, label=left:$a_2$] at (-1,9) (a2) {};
\node[blacknode, label=right:$a_3$] at (3,9) (a3) {};

\node[blacknode, label={[fill=white, inner sep=1pt]left:$a_4$}] at (-2.6,8) (a4) {};
\node[blacknode, label=right:$a_5$] at (-2,8) (a5) {};
\node[blacknode, label=right:$a_6$] at (3.6,8) (a6) {};

\node[whitenode] at (0,9) (a0c1) {};
\node[whitenode] at (1,9)  (a0c2) {};

\node[whitenode] at (-3.5,8) (a1c1) {};
\node[whitenode] at (-4,8)   (a1c2) {};

\node[whitenode] at (-1.2,8) (a2c1) {};
\node[whitenode] at (-0.8,8)  (a2c2) {};

\node[whitenode] at (-0.2,8) (a0c1d1) {};
\node[whitenode] at (0.2,8)  (a0c1d2) {};

\node[whitenode] at (1.2,8) (a0c2d1) {};
\node[whitenode] at (0.8,8)  (a0c2d2) {};

\node[whitenode] at (2.4,8) (a3c1) {};
\node[whitenode] at (3,8) (a3c2) {};

\node[whitenode] at (-2.75,7) (a4c1) {};
\node[whitenode] at (-2.45,7) (a4c2) {};

\node[whitenode] at (-2.15,7) (a5c1) {};
\node[whitenode] at (-1.85,7) (a5c2) {};

\node[whitenode] at (3.3,7) (a6c1) {};
\node[whitenode] at (3.9,7) (a6c2) {};

\draw (a0) -- (a1);
\draw (a0) -- (a2);
\draw (a0) -- (a3);
\draw (a0) -- (a0c1);
\draw (a0) -- (a0c2);

\draw (a1) -- (a4);
\draw (a1) -- (a5);

\draw (a3) -- (a6);

\draw (a1) -- (a1c1);
\draw (a1) -- (a1c2);

\draw (a2) -- (a2c1);
\draw (a2) -- (a2c2);

\draw (a3) -- (a3c1);
\draw (a3) -- (a3c2);

\draw (a4) -- (a4c1);
\draw (a4) -- (a4c2);

\draw (a5) -- (a5c1);
\draw (a5) -- (a5c2);

\draw (a6) -- (a6c1);
\draw (a6) -- (a6c2);

\draw (a0c1) -- (a0c1d1);
\draw (a0c1) -- (a0c1d2);

\draw (a0c2) -- (a0c2d1);
\draw (a0c2) -- (a0c2d2);

\node[
    draw,
    rounded corners,
    thick,
    fit=(a0)
        (a0c1) (a0c2)
        (a0c1d1) (a0c1d2)
        (a0c2d1) (a0c2d2),
    inner sep=4pt,label=below:$T_{\bar{a}}^{a_0}$
] {};

\node[
    draw,
    rounded corners,
    thick,
    fit=(a3)
        (a3c1) (a3c2),
    inner sep=4pt,label=below:$T_{\bar{a}}^{a_3}$
] {};

\node[
    draw,
    rounded corners,
    thick,
    fit=(a4)
        (a4c1) (a4c2),
    inner sep=4pt,label=below:$T_{\bar{a}}^{a_4}$
] {};

\node[blacknode, label={[fill=white, inner sep=1pt]left:$a_4$}] at (-2.6,8) (a4) {};

\end{tikzpicture}
        \]
\EndAccSupp{}
    Note that the trees $T^{a_i}_{\bar{a}}$ are all disjoint from each other, and that the isomorphism type of $T$ is determined by the finite tree $\bar{a}$ together with the isomorphism types of $T_{\bar{a}}^{a_i}$. $T^{a_i}_{\bar{a}}$ can be defined by the following $\Sigma_1$ formula $\eta_i(x;a_i,\bar{a})$:
    \[ \eta_i(x,a_i,\bar{a}) := \bigdoublevee_{n = 0}^\infty \exists y_0,\dots,y_n\ \left[a_i = y_0 \wedge P(y_1) = y_0 \wedge \cdots \wedge P(y_n)= y_{n-1} \wedge y_n = x \wedge \left( \bigwedge_{j \neq i}y_1 \neq a_j \right)\right].\]
\end{defn}

With the formula $\eta(x,y_i,\bar{y})$ we may relativize a formula $\varphi$ to a tree $T^{a_i}_{\bar{a}}$. We obtain a formula $\vp\restriction^{x_i}_{\bar{x}}$ of the same complexity such that if $\bar{a}\in T$ is a finite subtree of $T$ then 
\[ T\models\vp\restriction^{a_i}_{\bar{a}}\ \Longleftrightarrow T^{a_i}_{\bar{a}}\models\vp.\]
We do this by replacing each quantifier $\exists x \ldots$ in $\varphi$ with $\exists x (\eta(x,a_i,\bar{a}) \wedge \ldots)$ and each quantifier $\forall x \ldots$ in $\varphi$ with $\forall x (\eta_i(x,a_i,\bar{a}) \to \ldots)$. Since $\eta(x,y,\bar{z})$ is a $\Sigma_1$ formula, $\varphi$ and $\varphi \restriction^{a_i}_{\bar{a}}$ are the same complexity $\Sigma_\alpha$/$\Pi_\alpha$ or $\me_\alpha$/$\ma_\alpha$.

\begin{remark}\label{rem:double-res}
    Corresponding to the fact that a subinterval of a subinterval of a linear order is a subinterval of that linear order, we have the following fact. Given $\bar{a}$ a subtree of $T$, and $\bar{b} = (\bar{b}_1,\ldots,\bar{b}_n)$ where each $\bar{b}_i$ is a subtree of $T^{a_i}_{\bar{a}}$ containing the roots $a_i$, note that $(T^{a_i}_{\bar{a}})^{b_{i,j}}_{\bar{b_i}}=T^{b_{i,j}}_{\bar{a}\bar{b_i}} = T^{b_{i,j}}_{\bar{b}}$. Note that each entry of $\bar{a}$ is contained in $\bar{b}$, so that for example $\bar{a}\bar{b}_i$ has $a_i$ appearing twice, which we will allow to avoid cumbersome notation. We will sometimes write $T^{b_{i,j}}_{\bar{a}\bar{b}}$ for $T^{b_{i,j}}_{\bar{b}}$.
\end{remark}

To see that the $T^{a_i}_{\bar{a}}$ are the right analogue of intervals in a linear order, we prove the following fact about how back-and-forth relations between tuples in trees break up into back-and-forth relations between their descendant trees.

\begin{lem}\label{3.1}
Let $A$ and $B$ be tress and let $\bar{a}\in A$ and $\bar{b}\in B$ be finite subtrees. Let $\alpha \geq 1$. Then $(A,\bar{a})\geq_\alpha(B,\bar{b})$ if and only if $\bar{a}\cong_{\rtree}\bar{b}$ and $A^{a_i}_{\bar{a}}\geq_\alpha B^{b_i}_{\bar{b}}$ for each $i$. Here $\bar{a}\cong_{\rtree}\bar{b}$ means $\bar{a}$ and $\bar{b}$ are isomorphic as trees.
\end{lem}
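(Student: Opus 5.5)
I would prove the lemma by induction on $\alpha \geq 1$, proving both directions at once. The induction is carried out directly on the game definition of $\leq_\alpha$ (Definition~\ref{def:bfasym}), and uses the convention that every tuple in play is closed under parents. The preceding lemma allows this convention: extending any \textsf{Spoiler} move by its ancestors does not change the back-and-forth relation. Throughout, the key structural fact is the partition $A = \bigsqcup_i A^{a_i}_{\bar{a}}$. Every element of $A$ lies in exactly one descendant tree, namely that of its nearest ancestor in $\bar{a}$. The same holds for $B$. If $\bar{c}$ extends $\bar{a}$ to a larger finite subtree, then by Remark~\ref{rem:double-res} the descendant trees of $A$ avoiding $\bar{a}\bar{c}$ are exactly the descendant trees of the various $A^{a_i}_{\bar{a}}$ avoiding $\bar{c}_i$. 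Here $\bar{c}_i$ is the part of $\bar{c}$ in $A^{a_i}_{\bar{a}}$, together with the root $a_i$.

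\textbf{The forward direction.} Assume $(A,\bar{a})\geq_\alpha(B,\bar{b})$, which means $(B,\bar{b})\leq_\alpha(A,\bar{a})$.
\begin{itemize}
\item Since $\alpha\geq 1$, the relation $\leq_0$ holds, so $\bar{a}\cong_{\rtree}\bar{b}$ via $a_i\mapsto b_i$. In fact the parent relation and the root constant are preserved by any $\leq_1$ relation, so this is immediate.
\item To show $A^{a_i}_{\bar{a}}\geq_\alpha B^{b_i}_{\bar{b}}$, play the game in the trees. Let \textsf{Spoiler} play $\bar{d}$ in $A^{a_i}_{\bar{a}}$, closed under parents down to $a_i$.
\item Regard this as a move in $A$. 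The hypothesis gives $\bar{e}\in B$ with $(A,\bar{a}\bar{d})\leq_\beta(B,\bar{b}\bar{e})$ for the relevant $\beta<\alpha$.
\item By the quantifier-free agreement at level $\geq 0$, each $e_j$ has the same parent pattern as $d_j$. In particular $\bar{e}$ hangs below $b_i$ and avoids the other $b_k$, so $\bar{e}\subseteq B^{b_i}_{\bar{b}}$.
\item If $\beta\geq 1$, the induction hypothesis decomposes $(A,\bar{a}\bar{d})\leq_\beta(B,\bar{b}\bar{e})$ into the tree pieces. Recombining via the remark, restricted to the pieces inside $A^{a_i}_{\bar{a}}$, gives $(A^{a_i}_{\bar{a}},\bar{d})\leq_\beta(B^{b_i}_{\bar{b}},\bar{e})$.
\item If $\beta=0$, only the finite-tree isomorphism is needed, and this is clear.
\end{itemize}

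\textbf{The backward direction.} Assume $\bar{a}\cong_{\rtree}\bar{b}$ and $A^{a_i}_{\bar{a}}\geq_\alpha B^{b_i}_{\bar{b}}$ for each $i$.
\begin{itemize}
\item Let \textsf{Spoiler} play a parent-closed $\bar{d}$ in $A$, and fix $\beta<\alpha$. Split $\bar{d}$ into the pieces $\bar{d}_i=\bar{d}\cap A^{a_i}_{\bar{a}}$, each with $a_i$ prepended as root.
\item For each $i$, apply the hypothesis on $A^{a_i}_{\bar{a}}$ to get $\bar{e}_i\in B^{b_i}_{\bar{b}}$ with $(B^{b_i}_{\bar{b}},b_i\bar{e}_i)\leq_\beta(A^{a_i}_{\bar{a}},a_i\bar{d}_i)$.
\item Let $\bar{e}$ be the concatenation of the $\bar{e}_i$, listed in the same order as $\bar{d}$.
\item The finite trees $\bar{b}\bar{e}$ and $\bar{a}\bar{d}$ are isomorphic. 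This holds because each piece matches, and because the pieces attach to $\bar{a}$ and $\bar{b}$ in corresponding places.
\item If $\beta\geq 1$, then by the remark the descendant trees of $\bar{b}\bar{e}$ in $B$ are the descendant trees of $b_i\bar{e}_i$ in the $B^{b_i}_{\bar{b}}$. Applying the induction hypothesis to each piece (right-to-left) and then recombining (left-to-right) gives $(B,\bar{b}\bar{e})\leq_\beta(A,\bar{a}\bar{d})$.
\end{itemize}

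\textbf{Main obstacle.} I expect the main difficulty to be bookkeeping rather than ideas, in two places. The first is that the induction hypothesis must be applied with the directions swapped, since $\leq_\alpha$ flips to $\leq_\beta$ in the opposite direction. The second is that the hypothesis is applied simultaneously to several tree pieces and then recombined. The recombination relies on two facts, both of which I would state carefully:
\begin{itemize}
\item the game on the whole tree restricted to tuples inside one descendant tree is the same as the game on that descendant tree, because the $\eta_i$ relativization preserves the relevant formulas;
\item the finite-tree isomorphism of the combined tuples follows from isomorphism on each piece together with $\bar{a}\cong_{\rtree}\bar{b}$.
\end{itemize}

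The $\beta=0$ steps and the quantifier-free convention of $\leq_0$, which only compares the first $|\bar{a}|$ formulas, need a brief separate check. Since every tuple in play is a subtree, $\leq_0$ here amounts to isomorphism as finite rooted trees, and that is exactly what the construction produces.
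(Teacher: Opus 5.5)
\textbf{Gap: the backward direction is oriented the wrong way.} As written, it does not prove the statement, because both back-and-forth relations point in the wrong direction.

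By Definition~\ref{def:bfasym}, the hypothesis $A^{a_i}_{\bar a}\geq_\alpha B^{b_i}_{\bar b}$, i.e.\ $B^{b_i}_{\bar b}\leq_\alpha A^{a_i}_{\bar a}$, answers a move $a_i\bar d_i$ in $A^{a_i}_{\bar a}$ with some $b_i\bar e_i$ satisfying $(A^{a_i}_{\bar a},a_i\bar d_i)\leq_\beta(B^{b_i}_{\bar b},b_i\bar e_i)$. You instead wrote $(B^{b_i}_{\bar b},b_i\bar e_i)\leq_\beta(A^{a_i}_{\bar a},a_i\bar d_i)$.

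Likewise, the goal $(B,\bar b)\leq_\alpha(A,\bar a)$ requires, for a \textsf{Spoiler} move $\bar d$ in $A$, an answer $\bar e$ with $(A,\bar a\bar d)\leq_\beta(B,\bar b\bar e)$. This is the orientation you used correctly in your forward direction. You conclude $(B,\bar b\bar e)\leq_\beta(A,\bar a\bar d)$ instead.

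Since $\leq_\beta$ is not symmetric, neither the claimed step nor the claimed conclusion is available. One orientation at level $\beta$ only yields the other at levels strictly below $\beta$.

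The repair is to flip both relations:
\begin{enumerate}
\item Decompose each $(A^{a_i}_{\bar a},a_i\bar d_i)\leq_\beta(B^{b_i}_{\bar b},b_i\bar e_i)$ by the forward direction at $\beta$.
\item Identify the resulting pieces with $A^{x}_{\bar a\bar d}$ and $B^{y}_{\bar b\bar e}$ via Remark~\ref{rem:double-res}.
\item Recombine by the backward direction at $\beta$ to get $(A,\bar a\bar d)\leq_\beta(B,\bar b\bar e)$.
\end{enumerate}
Your parenthetical labels are also swapped: decomposing is the left-to-right implication, and recombining is the right-to-left one. After this fix, your backward direction is exactly the paper's.

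\textbf{Forward direction: correct, but a different route.} The paper uses no induction here. It applies Theorem~\ref{thm:Karp} together with relativization. Suppose $B^{b_i}_{\bar b}\models\vp$ with $\vp$ a $\Pi_\alpha$ sentence. Then $B\models\vp\restriction^{b_i}_{\bar b}$, and this is still $\Pi_\alpha$ because the defining formula $\eta_i$ is $\Sigma_1$. Hence $A\models\vp\restriction^{a_i}_{\bar a}$, i.e.\ $A^{a_i}_{\bar a}\models\vp$.

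Your version stays inside the game. You answer in $A$, use quantifier-free agreement to see that the answer lies in $B^{b_i}_{\bar b}$, and then decompose and recombine with the induction hypothesis. This works for $\beta\geq 1$, but it forces both directions into one simultaneous induction.

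The paper's route gives the forward direction for every $\alpha$ at once by a one-line syntactic argument. Only the backward direction then needs induction, and it uses both directions at smaller $\beta$.
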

\begin{proof}
First suppose $(A,\bar{a})\geq_\alpha(B,\bar{b})$. Clearly $\bar{a}\cong_{\rtree}\bar{b}$. Given $i$, suppose that $B^{b_i}_{\bar{b}}\models\vp$ where $\vp$ is $\Pi_\alpha$. Then $B\models\vp\restriction^{b_i}_{\bar{b}}$, so $A\models\vp\restriction^{a_i}_{\bar{a}}$, meaning $A^{a_i}_{\bar{a}}\models\vp$. Thus $A^{a_i}_{\bar{a}}\geq_\alpha B^{b_i}_{\bar{b}}$.

Now suppose $\bar{a}\cong_{\rtree}\bar{b}$ and $A^{a_i}_{\bar{a}}\geq_\alpha B^{b_i}_{\bar{b}}$ for each $i$. Let $\beta<\alpha$ and $\bar{c}\in A$. Without loss of generality let $\bar{c}=(\bar{c}_1,\bar{c}_2,\ldots,\bar{c}_n)$ where $\bar{c_i}\in A^{a_i}_{\bar{a}}$ for each $i$. Then there are $\bar{d_i}\in B^{b_i}_{\bar{b}}$ such that $(A^{a_i}_{\bar{a}},\bar{c_i})\leq_\beta(B^{b_i}_{\bar{b}},\bar{d_i})$. We may assume that the $\bar{c}_i$ are subtrees of $A_{\bar{a}}^{a_i}$ containing $a_i$ and similarly for $\bar{d}_i$ and $B_{\bar{b}}^{b_i}$. Then, by the induction hypothesis we have $(A^{a_i}_{\bar{a}})^{c_{i,j}}_{\bar{c}_i} \leq_\beta (B^{b_i}_{\bar{b}})^{d_{i,j}}_{\bar{d}_i}$ which is the same as $A_{\bar{a}\bar{c}}^{c_{i,j}} \leq_\beta B^{d_{i,j}}_{\bar{b}\bar{d}}$ by Remark \ref{rem:double-res}. 
Then---again, by the induction hypothesis---we have $(A,\bar{a}\bar{c})\leq_\beta(B,\bar{b}\bar{d})$ and thus $(A,\bar{a})\geq_\alpha(B,\bar{b})$.
\end{proof}

The following lemma is a syntactic version of this previous fact. As with linear orders, the lemma is only true for $\E_\alpha$ sentences, and not for $\Sigma_\alpha$ sentences. (A counterexample arises from the sentence defined in Proposition \ref{prop:example}, with the proof being similar to the corresponding lemma from \cite{HTGonzalez}.)

\begin{lem}
\label{3.2}
Let $T$ be a tree and $\bar{a}\in T$ be a finite subtree containing the root. Suppose $T\models\vp(\bar{a})$ where $\vp$ is a $\me_\alpha$ formula. Then there are $\me_\alpha$ sentences $\theta_i$ such that $T^{a_i}_{\bar{a}}\models\theta_i$ and if $S$ is a tree and $\bar{b}\in S$ has $\bar{b}\cong_{\rtree}\bar{a}$ and $S^{b_i}_{\bar{b}}\models\theta_i$ then $S \models\vp(\bar{b})$.
\end{lem}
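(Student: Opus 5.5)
Rather than induct on the syntax of $\varphi$, I will get the $\theta_i$ from the back-and-forth characterizations: Lemma \ref{2.6} and Lemma \ref{cor:karp}, combined with the decomposition in Lemma \ref{3.1}. The obvious choice $\theta_i=\psi_{a_i,T^{a_i}_{\bar a},\alpha}$ is only $\ma_\alpha$, not $\me_\alpha$. So the plan is to peel off the outer existential and disjunction of $\varphi$ and use $\ma_\beta$ characterizing formulas for $\beta<\alpha$.

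First I write $\varphi(\bar x)=\bigdoublevee_{k}\exists\bar y_k\,\varphi_k(\bar x,\bar y_k)$ with each $\varphi_k\in\vwA_{\beta_k}$ and $\beta_k<\alpha$. Since $T\models\varphi(\bar a)$, I fix $k$ and a witness $\bar c$ with $T\models\varphi_k(\bar a,\bar c)$. Let $\beta=\beta_k$. By the lemma on ancestor closure, I may enlarge $\bar c$ so that $\bar a\bar c$ is a finite subtree. Splitting by components, $\bar c=(\bar c_1,\dots,\bar c_n)$ with $\bar c_i$ a subtree of $T^{a_i}_{\bar a}$ containing $a_i$. For each $i$ I define
\[\theta_i:=\exists \bar z_i\ \psi_{\bar c_i,T^{a_i}_{\bar a},\beta}(\bar z_i),\]
where the existential also asserts, via a finitary quantifier-free clause, that $\bar z_i$ forms a subtree containing the root with the same tree shape as $\bar c_i$. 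By Lemma \ref{2.6} the formula $\psi$ is $\ma_\beta\subseteq\vwA_\beta$, so $\theta_i$ is $\me_\alpha$. If $\beta=0$, I replace $\psi$ by the quantifier-free atomic diagram, which is fine since $\me_\alpha$ contains $\Sigma_1$. Taking $\bar z_i=\bar c_i$ shows $T^{a_i}_{\bar a}\models\theta_i$.

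For the converse, suppose $\bar b\cong_{\rtree}\bar a$ and $S^{b_i}_{\bar b}\models\theta_i$ for each $i$, with witnesses $\bar d_i$. Then $(S^{b_i}_{\bar b},\bar d_i)\geq_\beta(T^{a_i}_{\bar a},\bar c_i)$. I need $(S,\bar b\bar d)\geq_\beta(T,\bar a\bar c)$, where $\bar d=(\bar d_1,\dots,\bar d_n)$. Two ingredients give this.

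First, the tuples $\bar b\bar d$ and $\bar a\bar c$ are isomorphic as finite trees. This holds because $\bar b\cong\bar a$, each $\bar d_i\cong\bar c_i$ with roots matched, and the pieces are glued in the same way at $a_i$ and $b_i$.

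Second, for each $c_{i,j}$, Lemma \ref{3.1} applied inside the tree $T^{a_i}_{\bar a}$ gives $(S^{b_i}_{\bar b})^{d_{i,j}}_{\bar d_i}\geq_\beta(T^{a_i}_{\bar a})^{c_{i,j}}_{\bar c_i}$. By Remark \ref{rem:double-res} this is $S^{d_{i,j}}_{\bar b\bar d}\geq_\beta T^{c_{i,j}}_{\bar a\bar c}$.

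Lemma \ref{3.1} applied to the whole trees then yields $(S,\bar b\bar d)\geq_\beta(T,\bar a\bar c)$. This needs $\beta\ge1$; for $\beta=0$ the tree isomorphism already suffices. By Lemma \ref{cor:karp}, $\vwA_\beta$ formulas transfer along $\geq_\beta$, since that lemma covers $\ma_\beta$ and $\vwA_\beta$ is its closure under $\bigdoublevee,\bigdoublewedge$. Hence $S\models\varphi_k(\bar b,\bar d)$, and so $S\models\varphi(\bar b)$.

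The main obstacle is the bookkeeping in these last steps. I must make sure the realizing tuple can be taken to be a subtree containing each root $a_i$, which adjoining $a_i$ to $\bar c_i$ handles. I must make sure the tree-shape clause in $\theta_i$ makes the glued tuple $\bar b\bar d$ isomorphic to $\bar a\bar c$, respecting the repeated entries allowed in Remark \ref{rem:double-res}. I must also handle the edge cases $\beta=0$ and $\alpha=1$; for $\alpha=1$, $\varphi$ is $\Sigma_1$ and the same argument works with quantifier-free diagrams.
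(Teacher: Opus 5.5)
Your proof is correct and follows essentially the paper's route. You peel off the outer $\bigdoublevee\exists$, split the witness $\bar c$ into subtrees $\bar c_i$ of the components $T^{a_i}_{\bar a}$, take the $\ma_\beta$ formulas from Lemma \ref{2.6}, and conclude via Lemma \ref{3.1}, Remark \ref{rem:double-res}, and Lemma \ref{cor:karp}.

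The only difference is in how $\theta_i$ is built. You characterize the whole pair $(T^{a_i}_{\bar a},\bar c_i)$ by a single formula and recover the pieces with an extra application of Lemma \ref{3.1} inside each component. The paper instead builds $\theta_i$ directly from relativized sentences $\chi_{i,j}\restriction^{x_j}_{\bar x}$ for the pieces $T^{c_{i,j}}_{\bar a\bar c}$. Your version avoids relativization and also handles the $\alpha=1$ case, which the paper leaves implicit.
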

\begin{proof}
Without loss of generality let $\vp(\bar{x})$ be of the form $\vp(\bar{x}):=\bigdoublevee_{k<\omega}\exists\bar{y} \; \psi_k(\bar{x},\bar{y})$ where each $\psi_k$ is $\bar{\ma}_\beta$ for some $\beta<\alpha$. Since $T\models\vp(\bar{a})$ there is some $\bar{c}\in T$ and $k<\omega$ such that $T\models\psi_k(\bar{a},\bar{c})$. 
Without loss of generality suppose $\bar{c}=(\bar{c}_1, \bar{c}_2,\ldots,\bar{c}_{n})$ where $\bar{c_i}$ is a subtree of $T^{a_i}_{\bar{a}}$ containing $a_i$. Note that following Remark \ref{rem:double-res} we have $(T^{a_i}_{\bar{a}})^{c_{i,j}}_{\bar{c_i}}=T^{c_{i,j}}_{\bar{c}}$. By Lemma \ref{2.6} there are $\ma_\beta$ sentences $\chi_{i,j}$ such that $S \models\chi_{i,j}$ if and only if $S \geq_{\beta}T^{c_{i,j}}_{\bar{a}\bar{c}}$. For each $i$ define
\[\theta_i:=\exists\bar{x}\ [\bar{x}\cong_{\rtree}\bar{c_i} \; \land \; \bigdoublewedge_{j}\chi_{i,j}\restriction^{x_j}_{\bar{x}}]\]
where $\bar{x}\cong_{\rtree}\bar{c_i}$ stands for the infinitary quantifier-free formula with free variables $\bar{x}$ which expresses that the parent relations on $\bar{x}$ is the same as on $\bar{c}$ after identifying the roots.
Then $T^{a_i}_{\bar{a}}\models\theta_i$. Suppose $S$ is a tree and $\bar{b}\in S$ is a finite subtree containing the root with $\bar{b}\cong_{\rtree}\bar{a}$ and $S^{b_i}_{\bar{b}}\models\theta_i$ for each $i$. Let $\bar{d_i}$ be the witness of $\theta_i$ in $S^{b_i}_{\bar{b}}$. Then $S^{d_{i,j}}_{\bar{b}\bar{d}}\geq_{\beta}T^{c_{i,j}}_{\bar{a}\bar{c}}$, so by Lemma \ref{3.1} $(S,\bar{d})\geq_{\beta}(T,\bar{c})$. Thus $S\models\psi_k(\bar{b},\bar{d})$ for $\bar{d} = (\bar{d}_1,\ldots,\bar{d}_n)$ and therefore $S\models\vp(\bar{b})$.
\end{proof}

\section{Proof of the main theorem}

In this section we prove the main theorem, which is that given a $\Pi_\alpha$ sentence $\varphi$ of trees, there is a tree $T \models \varphi$ such that $T$ has a $\Pi_{\alpha + 3}$ Scott sentence and hence has Scott rank at most $\alpha+2$. As in \cite{HTGonzalez}, since each $\Pi_\alpha$ sentence is $\E_{\alpha + 1}$, after replacing $\alpha+1$ by $\alpha$ it suffices to assume that $\varphi$ is $\E_{\alpha}$ and show that is has a model with a $\Pi_{\alpha + 2}$ Scott sentence and hence Scott rank at most $\alpha + 1$. This sentence $\varphi$ will be fixed for the remainder of this section.

The overall strategy is as follows. We use a Henkin construction to build a model of $\varphi$ which is generic in a certain sense. Intuitively, one should think that we try to make any two tuples different from each other, if possible, by a $\me_\alpha$ sentence. If this fails, then we will try to make them be automorphic. We will show that the automorphism orbit of each tuple of this model is $\me_{\alpha+1}$-definable. This will imply that such a generic model has a $\Pi_{\alpha + 2}$ Scott sentence and hence has Scott rank at most $\alpha+1$. This last step uses a result of Chen, Gonzalez, and Harrison-Trainor \cite{CGHT} that having a $\Pi_{\alpha + 2}$ Scott sentence is equivalent to each automorphism orbit being $\me_{\alpha+1}$-definable, a strengthening (in the direction in which we apply it) of a result of Montalb\'an that having a $\Pi_{\alpha + 2}$ Scott sentence is equivalent to each automorphism orbit being $\Sigma_{\alpha+1}$-definable.


\subsection{Forcing and generic trees}

\begin{defn}
    A set $\A$ of $\mathcal{L}_{\omega_1\omega}$ formulas is an existential fragment if it satisfies the following properties: 
    \begin{enumerate}
        \item $\A$ contains the atomic and negated atomic formulas. 
        \item $\A$ is stable under taking subformulas, finite conjunctions and disjunctions, and existential quantification.
        \item If $\vp(v_0,\dots,v_n)\in\A$ and $\tau$ is a term, then $\vp(\tau,v_1,\dots,v_n)\in\A$.
    \end{enumerate}
\end{defn}

\begin{lem}\label{4.2}
Let $\vp$ be an $\E_\alpha$ sentence of trees. Then there is a countable existential segment $\A$ of $\me_{\alpha}$ formulas which contains $\vp$. Furthermore, we may find such an $\A$ together with a countable collection $\mbC$ of countable trees  with the following properties
\begin{enumerate}
    \item For any variables $\bar{x}$ appearing in $\A$ and $\psi\in\A$, $\psi\restriction^{x_i}_{\bar{x}}\in\A$. 
    \item If $\psi(\bar{x})\in\A$ is satisfiable, then there is some $T\in\mbC$ and $\bar{a}\in T$ such that $T\models\psi(\bar{a})$.
    \item Suppose $T\in\mbC$, $\bar{a}\in T$, and $T\models\psi(\bar{a})$ where $\psi(\bar{x})\in\A$. Then there are $\theta_i\in\A$ for each $i$ as in Lemma \ref{3.2}. 
\end{enumerate}
\end{lem}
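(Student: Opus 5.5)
We construct $\A$ and $\mbC$ simultaneously by an $\omega$-stage closure argument. We build increasing countable sets $\A_0 \subseteq \A_1 \subseteq \cdots$ of $\me_\alpha$ formulas and $\mbC_0 \subseteq \mbC_1 \subseteq \cdots$ of countable trees, and set $\A = \bigcup_n \A_n$ and $\mbC = \bigcup_n \mbC_n$.

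Start with $\A_0$ the set of subformulas of $\vp$ together with the atomic and negated atomic formulas in countably many variables, and $\mbC_0 = \emptyset$. At stage $n+1$ we perform the following closure steps.
\begin{enumerate}
\item Close $\A_n$ under subformulas, finite conjunctions and disjunctions, existential quantification, and substitution of terms (variables and the constant $r$).
\item Add $\psi\restriction^{x_i}_{\bar{x}}$ for every $\psi \in \A_n$ and every tuple of variables $\bar{x}$. Relativization preserves $\me_\alpha$, as noted after the definition of $\eta_i$. Its subformulas are relativizations of subformulas of $\psi$, plus the $\Sigma_1$ formulas $\eta_i$ and their subformulas; these are $\me_1 \subseteq \me_\alpha$ (for $\alpha \ge 1$) and are countably many.
\item For each satisfiable $\psi(\bar{x}) \in \A_n$, choose one countable tree $T$ and a tuple $\bar{a}$ with $T \models \psi(\bar{a})$, and put $T$ into $\mbC_{n+1}$. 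Only countably many choices are made.
\item For each $T \in \mbC_n$, each tuple $\bar{a} \in T$ (countably many), and each $\psi \in \A_n$ with $T \models \psi(\bar{a})$, apply Lemma \ref{3.2} and add the resulting sentences $\theta_i$, together with their subformulas, to $\A_{n+1}$.
\end{enumerate}
Each stage adds only countably many formulas and trees, so $\A$ and $\mbC$ are countable. Any instance of a closure condition involves finitely many formulas, all appearing at some finite stage $n$, so it is witnessed at stage $n+1$. Hence properties (1)--(3) hold and $\A$ is an existential fragment.

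Two bookkeeping points need care. First, Lemma \ref{3.2} assumes $\bar{a}$ is a finite subtree containing the root. For an arbitrary $\bar{a}$ we pass to its ancestor closure $\bar{a}'$, justified by the earlier lemma on ancestor closures. Alternatively, property (3) can be read as applying only to finite subtrees $\bar{a}$, as in Lemma \ref{3.2}.

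Second, and more seriously, the $\theta_i$ produced by Lemma \ref{3.2} are $\me_\alpha$, but their subformulas include the $\ma_\beta$ sentences $\chi_{i,j}$ from Lemma \ref{2.6}, relativized. Those subformulas are $\vwA_\beta$ rather than $\me_\alpha$. So "a fragment of $\me_\alpha$ formulas closed under subformulas" must be read as: every formula in $\A$ is a subformula of, or lies inside the hierarchy up to, $\me_\alpha$. In other words, $\A$ is a countable existential fragment whose members are all in $\bigcup_{\beta\le\alpha}(\me_\beta \cup \vwA_\beta)$, and the $\me_\alpha$ bound holds at the top level. The same issue already arises with the subformulas of $\vp$ itself, so this is consistent with the intended meaning.

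The main obstacle is therefore this consistency check: verifying that the closure operations in steps 1, 2 and 4 never leave the class of formulas permitted in $\A$. In particular, existential quantification and finite Boolean combinations of $\me_\alpha$ formulas remain $\me_\alpha$. The rest is a standard countable Löwenheim–Skolem-style closure.
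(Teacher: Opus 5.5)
Your proposal is correct and follows essentially the same route as the paper. Both use an $\omega$-stage countable closure that interleaves the existential-fragment closure conditions with three additions: the relativizations $\psi\restriction^{x_i}_{\bar{x}}$, one witnessing tree in $\mbC$ for each satisfiable formula, and the sentences $\theta_i$ of Lemma~\ref{3.2} for each instance $T\models\psi(\bar{a})$ with $T\in\mbC$.

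One correction: your ``more serious'' worry is unfounded. By definition $\me_\alpha$ is the closure of $\bigcup_{\beta<\alpha}\vwA_\beta$, so $\vwA_\beta\subseteq\me_\alpha$ for $\beta<\alpha$, and in fact $\me_\alpha$ is closed under subformulas. So no reinterpretation is needed, and you should not settle for the looser class $\bigcup_{\beta\le\alpha}(\me_\beta\cup\vwA_\beta)$. Admitting $\vwA_\alpha$ formulas into $\A^*$ could break the $\me_{\alpha+1}$ count in Lemma~\ref{lem:final}, where negations of members of $\A^*$ appear under an existential quantifier.
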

\begin{proof}
We build finite approximations $\A=\bigcup_{n<\omega}\A_n$ and $\mathbb{C}=\bigcup_{n<\omega}\mbC_n$ with $\A_n\subseteq\A_{n+1}$ and $\mbC_n\subseteq\mbC_{n+1}$ for each $n<\omega$. Set $\A_0=\{\vp\}$ and $\mbC_0=\varnothing$. We first ensure that $\A$ is an existential segment. It is easy to add all the atomic and negated atomic formulas. For each formula which appears in $\A_n$ we can ensure each of its subformulas and term substitutions appear at a later stage because there are only countably many of them. Similarly, it is easy to ensure that $\mc{A}$ satisfies the other conditions to be an existential fragment.


We satisfy the remaining three properties in a similar way. For each $\psi\in\A_k$ and variable $\bar{x}$ that appeared in $\A_k$, we can ensure $\psi\restriction^{x_i}_{\bar{x}}\in\A_n$ for some $n>k$. For infinitely many $k$, we may let $\A_{k+1}=\A_k$ and 
\[\mbC_{k+1}=\mbC_k\cup\{T_{\psi(\bar{x})} \;:\; \psi(\bar{x})\in\A_k\text{ satisfiable}\} \]
where the countable trees $T_{\psi(\bar{x})}$ are chosen such that there is $\bar{c}\in T_{\psi(\bar{x})}$ with $T_{\psi(\bar{x})}\models\psi(\bar{c}).$
Finally suppose $T\in\mbC_k$, $\psi(\bar{x})\in\A_k$, and $\bar{a}\in T$ where $T\models\psi(\bar{a})$. Then for each $i$, there is a sentence $\theta_i$ as in Lemma \ref{3.2}. We may include all of them in some $\A_n$ where $n>k$.
\end{proof}

Given the fixed $\E_\alpha$ sentence $\varphi$ we fix such $\A$ and $\mbC$ for the rest of this article. Let $\A^*$ be the satisfiable formulas of $\A$. For two sentences $\chi$ and $\psi$ in $\A^*$, we write $\chi\leq\psi$ if every model of $\chi$ is a model of $\psi$. We say $\chi\in\A^*$ \textit{forces unity} if for any $\psi_0,\psi_1\leq\chi$ with $\psi_0,\psi_1\in\A^*$, $\psi_0\land\psi_1$ is satisfiable. Also, we say $\chi\in\A^*$ \textit{forces splitting} if for every $\psi\in\A^*$ with $\psi\leq\chi$, there are $\psi_0,\psi_1\in\A^*$ with $\psi_0,\psi_1\leq\psi$ where $\psi_0\land\psi_1$ is not satisfiable.

We say that a tree $T$ is \textit{generic} if
\begin{enumerate}
    \item For any finite subtree $\bar{a}\in T$ containing the root there is a $\chi\in\A^*$ that forces unity or splitting such that $T\models\chi^{a_0}_{\bar{a}}$, and
    \item For any finite subtree $\bar{a}\in T$ containing the root and any $\psi\in\A^*$ either $T\models\psi^{a_0}_{\bar{a}}$ or there is some $\chi\in\A^*$ incompatible with $\psi$ such that $T\models\chi^{a_0}_{\bar{a}}$.
\end{enumerate}
We have stated all of these conditions as referring specifically to $a_0$ from among $\bar{a}$, but because they apply to all tuples from the tree in any order, they are equivalent to the conditions holding for any $a_i$, e.g., (1) is equivalent to
\begin{enumerate}
    \item[(1*)] For any finite subtree $\bar{a}\in T$ containing the root and any $i$ there is a $\chi\in\A^*$ that forces unity or splitting such that $T\models\chi^{a_i}_{\bar{a}}$.
\end{enumerate}
Writing the conditions in terms of only $a_0$ will simplify the notation, but sometimes we will apply, e.g., (1) in the form of (1*) when we have a fixed tuple $\bar{a}$ for which we want to apply the condition to every $a_i$ at the same.

We say that a tree $T$ has property $(*)$ if for any finite subtrees $\bar{a},\bar{b} \in T$ containing the root and with $a_0 \neq b_0$ at the same level of the tree, either
\begin{enumerate}
    \item There are incompatible $\psi_0,\psi_1\in\A^*$ such that $T\models\psi_0\restriction^{a_0}_{\bar{a}}\land\psi_1\restriction^{b_0}_{\bar{b}}$ or
    \item There is some $\psi\in\A^*$ that forces unity such that $T\models\psi\restriction^{a_0}_{\bar{a}}\land\psi\restriction^{b_0}_{\bar{b}}$.
\end{enumerate}


\subsection{Construction of a generic tree}
Here we prove that there is a countable model of $\vp$ with the aforementioned properties.
\begin{thm}
There is a countable generic tree with property $(*)$ satisfying $\vp$.
\end{thm}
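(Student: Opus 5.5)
We build the tree $T$ by a Henkin-style construction in $\omega$ stages, copying the approach of \cite{HTGonzalez} for linear orders. Only now the ``intervals'' are the descendant trees $T^{a_i}_{\bar{a}}$ of Lemma \ref{3.1}.

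At stage $s$ the condition is a finite subtree $\bar{a}^s=(a_0,\dots,a_n)$, with $a_0$ the root, together with sentences $\chi^s_i\in\A^*$, one for each $i$. The intended meaning is that the descendant tree of $a_i$ avoiding $\bar{a}^s$ is to satisfy $\chi^s_i$. We maintain two invariants:
\begin{itemize}
\item[(a)] every $\chi^s_i$ is satisfiable, hence by Lemma \ref{4.2}(2) witnessed by some tree in $\mbC$;
\item[(b)] any trees $S_i\models\chi^s_i$, glued onto the finite tree $\bar{a}^s$, give a tree satisfying all commitments made so far.
\end{itemize}
At the start we take $\bar{a}^0=(r)$ and $\chi^0_0=\varphi$.

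Each stage does one of two things.
\begin{itemize}
\item[(i)] \textbf{Strengthen} some $\chi^s_i$ to a satisfiable $\chi'\le\chi^s_i$ in $\A^*$.
\item[(ii)] \textbf{Extend} $\bar{a}^s$ by a new element. To do this, pick a tree $S\in\mbC$ and a finite subtree $\bar{c}$ with $S\models\chi^s_i$ and $\bar{c}$ witnessing the $\E_\alpha$ sentence $\chi^s_i$. Lemma \ref{3.2}, via Lemma \ref{4.2}(3), then supplies sentences $\theta_j\in\A$ for the new descendant trees. Replacing $\chi^s_i$ by these $\theta_j$ preserves (a) and (b): by Lemma \ref{3.2}, any gluing of models of the $\theta_j$ onto $\bar{c}$ satisfies $\chi^s_i$ in the descendant tree of $a_i$.
\end{itemize}
The point of moving to witnesses in this way, rather than just strengthening, is that $\E_\alpha$ sentences decompose this way while $\Sigma_\alpha$ ones need not.

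We dovetail the following requirements so that each is met.
\begin{enumerate}
\item \emph{Every node has its children enumerated.} At some stage we add children of $a_i$, consistent with the eventual limit, so that the limit tree is the union of the $\bar{a}^s$. Concretely, each element of each witnessing tree is eventually added, using the existential witnesses $\exists x\,(P(x)=a_i\wedge\cdots)$, which lie in $\A$ by closure under restriction and existentials.
\item \emph{Genericity (1).} Given $\bar{a}$ and $i$, check whether $\chi^s_i$ has an extension forcing unity. If it does, strengthen to it. If it does not, then $\chi^s_i$ itself forces splitting, since any $\psi\le\chi^s_i$ fails to force unity and so has an incompatible pair below it.
\item \emph{Genericity (2).} Given $\psi$, if $\chi^s_i\wedge\psi$ is satisfiable (it lies in $\A$ by closure under conjunction), strengthen to it. Otherwise $\chi^s_i$ itself is incompatible with $\psi$.
\item \emph{Property $(*)$.} Given $a_0\ne b_0$ on the same level, with their current sentences $\chi,\chi'$, consider two cases.
\begin{itemize}
\item If some pair $\psi_0\le\chi$, $\psi_1\le\chi'$ is incompatible, strengthen to that pair.
\item Otherwise every extension of $\chi$ is compatible with every extension of $\chi'$. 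In particular $\chi\wedge\chi'$ is satisfiable, and by (2) we can refine further to get unity. Take $\psi\le\chi\wedge\chi'$ forcing unity (or argue that $\chi\wedge\chi'$ itself forces unity) and commit both trees to $\psi$.
\end{itemize}
The bookkeeping must handle the fact that $a_0,b_0$ have descendant trees relative to a tuple, but both are among the current $\bar{a}^s$ after we first extend to include them.
\end{enumerate}

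At the end, $T=\bigcup_s\bar{a}^s$ is countable. We then verify that $T$ satisfies the commitments, in particular that $T\models\varphi$. This verification is the main obstacle: unlike the finite-stage invariant (b), the limit is not literally a gluing of models of the $\chi^s_i$. We handle it by induction on $\alpha$, a truth lemma for $\A$: if $\chi$ is committed for $T^{a_i}_{\bar{a}}$ at some stage, then $T\models\chi\restriction^{a_i}_{\bar{a}}$.
\begin{itemize}
\item \emph{$\exists$ and $\bigdoublevee$ steps.} These follow because each committed $\E$-formula eventually has its witness added, as in requirement 1.
\item \emph{$\forall$ and $\bigdoublewedge$ steps.} For an $\bar{\ma}_\beta$ part, the commitment after decomposition is to $\ma_\beta$ sentences of the form $\chi_{i,j}$ from Lemma \ref{2.6}, asserting $\ge_\beta$ some tree in $\mbC$. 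Here we must ensure the limit tree stays $\ge_\beta$ the target. We do this by building $T$ so that its pieces are back-and-forth equivalent to pieces of trees in $\mbC$, using the decomposition Lemma \ref{3.1} together with requirement 3, which decides every sentence of $\A$.
\end{itemize}
If this inductive truth lemma proves delicate, we can instead restrict $\A$ so that universal parts are handled through Lemma \ref{3.1} by always copying entire $\beta$-types from $\mbC$ trees.
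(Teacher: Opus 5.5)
\textbf{The gap.} It is the step you flag yourself: nothing in your construction makes the limit $T=\bigcup_s\bar a^s$ satisfy the sentences committed along the way. The three things to be proved, $T\models\vp$, genericity and $(*)$, are all statements of exactly this kind.

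Invariant (b) only concerns gluings of models of the stage-$s$ sentences. The limit pieces are never such gluings, because each piece keeps being split and its sentence replaced.

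After a decomposition via Lemma~\ref{3.2}, your commitments on the new pieces are $\ma_\beta$ sentences $\bigdoublewedge_\ell\forall\bar y\,\rho_\ell(\bar y)$ with $\rho_\ell\in\vwE_\gamma$. These must hold at every tuple later placed in the piece, together with all the existential witnesses the $\rho_\ell$ demand there. Your step (ii) only witnesses the outer existential of the current sentence, which is vacuous for an $\ma_\beta$ sentence viewed as $\E_{\beta+1}$. No step ever instantiates these universals at later elements or realizes their existential content. Requirement 1 does not do it either: ``each element of each witnessing tree is eventually added'' has no meaning when the witnessing tree is discarded at every strengthening.

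Your proposed fixes do not close the gap:
\begin{itemize}
\item There is no fixed tree in $\mbC$ whose pieces (or $\beta$-types) the limit pieces are shown to copy, for the same reason that the witnessing tree keeps changing.
\item Appealing to requirement 3 is circular. Knowing that a sentence of $\A$ was decided at a finite stage tells you it is true in the limit only once the truth lemma is already known.
\end{itemize}

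\textbf{The missing idea.} Run a genuine Henkin construction rather than forcing over piece-sentences. The paper keeps finite sets $\Phi_n$ of sentences $\psi(\bar c)$ in new constants naming every element. Each $\Phi_n$ is kept satisfiable by some tree in $\mbC$ under an assignment of the constants. The paper imposes the usual closure conditions (1)--(5), revisiting (3) and (4) infinitely often, so that every conjunct of a committed $\bigdoublewedge$ and every instance $\psi(c)$ of a committed $\forall$, at every constant $c$, is eventually added. The model on the constants $C$ then satisfies all of $\Phi$ by the routine induction on formulas. So $\vp$, genericity and $(*)$ come for free once the relevant restricted sentences are in $\Phi$.

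Those restricted sentences are added by surgery:
\begin{enumerate}
\item Apply Lemma~\ref{3.2} to the $\E_\alpha$ formula $\exists\bar x\,\eta(\bar c,\bar x)$, where $\eta=\bigwedge\Phi_n$, to get sentences $\chi_i$.
\item Strengthen $\chi_0$, or pick the incompatible pair $\psi_0,\psi_1$, or use $\chi_0\wedge\chi_1$. This is exactly as in your dichotomies, which are fine.
\item Replace $T^{c_0}_{\bar c}$ in the witness tree by a model of the strengthened sentence, and reassign the other constants.
\end{enumerate}

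Existentially quantifying out the other constants also handles the issue you defer to ``bookkeeping''. Genericity and $(*)$ concern $T^{a_0}_{\bar a}$ for arbitrary finite subtrees $\bar a$. In your set-up these are generally not among the current pieces relative to $\bar a^s$, since they can contain elements of $\bar a^s$ and several pieces. So strengthening a single $\chi^s_i$ does not address them.
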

\begin{proof}
We execute a Henkin-type construction. Let $C$ be a countably infinite set of constant symbols. We will build finite collections of $\me_\alpha$ sentences $\Phi_n$ in the language of trees with additional constants $C$. The $\Phi_n$ will be an increasing sequence of finite sets, starting with $\Phi_0 = \{ \varphi \}$, and each consisting of sentences of the form $\psi(\bar{c})$ where $\psi(\bar{x})\in\A^*$ and $\bar{c}\in C$. $\Phi:=\bigcup_{n<\omega}\Phi_n$ will satisfy the following conditions.
\begin{enumerate}
\item If $\bigdoublevee_{n<\omega}\psi_n\in \Phi$ then $\psi_n\in \Phi$ for some $n<\omega$.
\item If $\exists\bar{x} \; \psi(\bar{x})\in \Phi$ then $\psi(\bar{c})\in \Phi$ for some $\bar{c}\in C$.
\item If $\bigdoublewedge_{n<\omega}\psi_n\in \Phi$ then $\psi_n \in \Phi$ for every $n<\omega$.
\item If $\forall\bar{x}\; \psi(\bar{x}) \in \Phi$ then $\psi(\bar{c}) \in \Phi$ for every $\bar{c}\in C$.
\item For each atomic sentence, either it or its negation is in $\Phi$.
\item The end product is generic:
\begin{enumerate}
    \item For each $\bar{c}\in C$ such that $\Phi$ implies that $\bar{c}$ is a finite tree there is a sentence $\psi\in\A^*$ that either forces unity or splitting with $\psi\restriction^{c_0}_{\bar{c}} \in \Phi$.
    \item For each $\bar{c}\in C$ such that $\Phi$ implies that $\bar{c}$ is a finite tree, and each $\psi\in\A^*$, either $\psi\restriction^{c_0}_{\bar{c}} \in \Phi$ or there is some $\chi\in\A^*$ that's incompatible with $\psi$ and $\chi\restriction^{c_0}_{\bar{c}} \in \Phi$.
\end{enumerate}
\item The end product has property $(*)$: for each $\bar{c}\in C$ such that $\Phi$ implies that $\bar{c}$ is a finite tree, if $\len(\bar{c})>1$ and $c_0$ and $c_1$ are at the same level, then either there are incompatible $\psi_0,\psi_1\in\A^*$ where $\psi_0\restriction^{c_0}_{\bar{c}} \in \Phi$ and $\psi_1\restriction^{c_1}_{\bar{c}} \in \Phi$ or there is a $\psi\in\A^*$ that forces splitting and $\psi\restriction^{c_0}_{\bar{c}}\land\psi\restriction^{c_1}_{\bar{c}} \in \Phi$.

(Though this does not look like $(*)$, it is enough to imply it. To achieve $(*)$ more directly, we would want to meet the following condition: Given a pair of tuples $\bar{c},\bar{d}\in C$ such that $\Phi$ implies that $\bar{c}$ and $\bar{d}$ are finite trees, and $c_0$ and $d_0$ are at the same level, either there are incompatible $\psi_0,\psi_1\in\A^*$ where $\psi_0\restriction^{c_0}_{\bar{c}} \in \Phi$ and $\psi_1\restriction^{d_0}_{\bar{d}} \in \Phi$ or there is a $\psi\in\A^*$ that forces splitting and $\psi\restriction^{c_0}_{\bar{c}}\land\psi\restriction^{d_0}_{\bar{d}} \in \Phi$. Given such $\bar{c}$ and $\bar{d}$, we can assume without loss of generality that no element of $\bar{c}$ is a descendant of $d_0$, and no element of $\bar{d}$ is a descendant of $c_0$. Thus for any tree $T$, $T_{\bar{c}\bar{d}}^{c_0} = T_{\bar{c}}^{c_0}$ and $T_{\bar{c}\bar{d}}^{d_0} = T_{\bar{d}}^{d_0}$, and similarly for a sentence $\varphi$, $\varphi_{\bar{c}\bar{d}}^{c_0} = \varphi_{\bar{c}}^{c_0}$ and $\varphi_{\bar{c}\bar{d}}^{d_0} = \varphi_{\bar{d}}^{d_0}$. We could then form the joint tuple $\bar{c}\bar{d}$, and rearrange it so that the first two entries are $c_0$ and $d_0$. Thus we arrive at the condition above.)

\end{enumerate}
In the conditions above, when we say that $\Phi$ implies that $\bar{c}$ is a finite tree, we mean that there is some $c_i$ such that $c_i = r$ is in $\Phi$, and for each other $c_j$ there is $c_k$ such that $P(c_j) = c_k$ is in $\Phi$.

We will ensure that each $\Phi_n$ is satisfiable in a tree by having, at each stage $n$, a tree $T \in \mathbb{C}$ and an assignment $v$ which assigns to each constant in $C$ an element of $T$ making $\Phi_n$ true in $T$. Moreover, for any constant $c$ appearing in $\Phi_n$ there will be a constant $d$ such that $P(c) = d$ is in $\Phi_n$ and for some $k$, $P^k(c) = r$ will be in $\Phi_n$, and we will have $c \neq d$ in $\Phi_n$ for any two constants $c,d$ in $C$. Thus $\Phi_n$ will put the structure of a finite tree containing the root on the constants that appear in $\Phi_n$.

Given $\Phi = \bigcup \Phi_n$ we can build a model of $\Phi$ whose domain is the constants $C$, as usual. This model will be a tree because of of the conditions described in the previous paragraph. Since $\vp \in \Phi_0 \subseteq \Phi$ we know that $\varphi$ is true in this tree. Also, conditions (6) and (7) will guarantee that this tree is generic and has property $(*)$. 

We now describe our construction of the $\Phi_n$. We begin with $\Phi_0=\{\vp, c = r\}$ for some constant $c$ and $T\in\mbC$ satisfying $\vp$. At each stage $n+1$, we will define $\Phi_{n+1}$ by taking a single step towards satisfying a single instance of one of the above conditions. We use standard dovetailing techniques to make sure that, in the end, all of the conditions are satisfied and will just describe below how to take a single step towards satisfying each of these conditions. For some conditions, namely (3) and (4), we must handle each instance of the condition infinitely many times. As an example, in (3), if $\bigdoublewedge_{n} \psi_n \in \Phi$ then we must return infinitely at infinitely many later stages each time adding a new $\psi_n$ to $\Phi$. We cannot add them all at once as each $\Phi_n$ must be finite. For each $n$, we also attach a countable tree $T\in\mbC$ that satisfies $\Phi_n$, interpreting constants as elements from $T$. Assuming $\Phi_n$ is satisfiable, such a tree can be chosen from $\mathbb{C}$ specifically due to (2) of Lemma \ref{4.2}. 
\begin{description}
\item [Step 1:] If, at some stage $n+1$, we are handling the instance of (1) corresponding to $\bigdoublevee_{m<\omega}\psi_m \in \Phi_n$ with $T \models \Phi_n$: There is some $\psi_m$ such that $T\models\psi_m$. We add $\psi_m$ to $\Phi_{n+1}$. We still have $T \models \Phi_{n+1}$.

\item [Step 2:] If, at some stage $n+1$, we are handling the instance of (2) corresponding to $\exists x \; \psi(x) \in \Phi_n$ with $T \models \Phi_n$: There is some $a\in T$ such that $T\models\psi(a)$. If $a=c$ in $T$ for some constant symbol $c\in C$ that appears in $\Phi_n$, then add $\psi(c)$ to $\Phi_{n+1}$. Otherwise add $\psi(c')$ to $\Phi_{n+1}$ where $c'\in C$ is some constant symbol that is not mentioned in $\Phi_n$. In the latter case, we must also choose new constants for any ancestors of $a \in T$ which are do not already correspond to constants, and also add to $\Phi_n$ the formulas determining the parents of any of these other new constants. We still have $T \models \Phi_{n+1}$.

\item [Step 3:] If, at some stage $n+1$, we are handling the instance of (3) corresponding to $\bigdoublevee_{m<\omega}\psi_m \in \Phi_n$: We add $\psi_m$ to $\Phi_{n+1}$ where $\psi_m$ is the next conjunct that has not yet been added to $\Phi$. We still have $T \models \Phi_{n+1}$.

\item [Step 4:] If, at some stage $n+1$, we are handling the instance of (4) corresponding to $\forall x\;\psi(x) \in \Phi_n$: We add $\psi({c})$ to $\Phi_{n+1}$ where ${c}$ is the next constant from $C$ (according to some enumeration) for which $\psi({c})$ has not yet been added to $\Phi$. If $c$ does not appear in $\Phi_n$, we must choose some new element $a$ of $T$ for it to correspond to, and we must also choose new constants from $C$ for any ancestors of $a \in T$ which are do not already correspond to constants, and also add to $\Phi_n$ the formulas determining the parents of any of these other new constants. We still have $T \models \Phi_{n+1}$.

\item [Step 5:] If at stage $n+1$ we are handling (5) and have $T \models \Phi_n$: Let $F\subseteq C$ be the finite set of constants that appears in $\Phi_n$. Then for each atomic sentence involving constants in $F$, include one of it or its negation to $\Phi_{n+1}$ based on whether or not $T$ satisfies it. We still have $T \models \Phi_{n+1}$.

\item [Step 6(a):] If at stage $n+1$ we are handling the instance of (6a) corresponding to constants $\bar{c}$ appearing in $\Phi_n$ and have $T \models \Phi_n$: Suppose that $\Phi_n$ implies that $\bar{c}$ is a finite tree. Let $\eta(\bar{c},\bar{b}):=\bigwedge \Phi_n$ where $\bar{b}$ are the constants in $\Phi_n$ that are not $\bar{c}$. Then  $T\models\exists\bar{x}\,\eta(\bar{c},\bar{x})$, so by Lemma \ref{3.2} there are sentences $\chi_i\in\A^*$ such that $T^{c_i}_{\bar{c}}\models\chi_i$ and for any countable tree $S$ if $\bar{d} \in S$ is a subtree of $S$ isomorphic to $\bar{c}$ in $T$, and if $S^{d_i}_{\bar{d}}\models\chi_i$, then $S \models\exists\bar{x}\,\eta(\bar{d},\bar{x})$. There is a sentence $\psi\leq\chi_0$ that either forces unity or splitting. Let $\Phi_{n+1}=\Phi_n\cup\{\psi^{c_0}_{\bar{c}}\}$. It remains to find a tree in $\mbC$ that satisfies $\Phi_{n+1}$. Since $\psi\in\A^*$ there is a countable tree $U \models\psi$. We modify $T$ to obtain a tree $\hat{T}$ by setting $\hat{T}_{\bar{c}}^{c_0} = U$, and $\hat{T}_{\bar{c}}^{c_i} = T_{\bar{c}}^{c_i}$ for $i > 0$.  (Essentially, we do ``surgery'' on $T$ by replacing the subtree below $c_0$ with a new subtree.) Then $\hat{T} \models \psi^{c_0}_{\bar{c}}$. Since $\hat{T}^{c_i}_{\bar{c}} \models\chi_i$ for each $i$, we have $\hat{T}^{c_i}_{\bar{c}} \models \exists \bar{x} \eta(\bar{c},\bar{x})$ and so we can choose some assignment of the constants such that $\hat{T} \models \Phi_n$. Thus $\Phi_{n+1}$ is satisfiable, meaning there a tree in $\mbC$ that satisfies it by (2) of Lemma \ref{4.2}.

\item [Step 6(b):] If at stage $n+1$ we are handling the instance of (6b) corresponding to constants $\bar{c}$ appearing in $\Phi_n$ and a formula $\psi \in \A^*$, and have $T \models \Phi_n$: Suppose that $\Phi_n$ implies that $\bar{c}$ is a finite tree. Set $\eta(\bar{c},\bar{b}):=\bigwedge \Phi_n$ where $\bar{b}$ are the constants in $\Phi_n$ that are not $\bar{c}$ and define the $\chi_i$ as we did in the previous step. If $\chi_0$ and $\psi$ are incompatible, we let $\Phi_{n+1}=\Phi_n\cup\{\chi_0\restriction^{c_0}_{\bar{c}}\}$ and don't change $T$. Otherwise, we let $\Phi_{n+1}=\Phi_n\cup\{\psi\restriction^{c_0}_{\bar{c}}\}$. In this case $\chi_0\land\psi$ is satisfiable, so there is a countable tree $U \models \chi_0\land\psi$. We can obtain a tree $\hat{T} \models \Phi_{n+1}$ in $\mathbb{C}$ exactly as in the previous step.

\item [Step 7:] If at stage $n+1$ we are handling the instance of (7) corresponding to constant $\bar{c}$ appearing in $\Phi_n$ with $\len(\bar{c})>1$ and $T \models \Phi_n$: Suppose that $\Phi_n$ implies that $\bar{c}$ is a finite tree. Find $\chi_i$ as before. Suppose there are incompatible $\psi_0,\psi_1\in\A^*$ where $\chi_0\land\psi_0$ and $\chi_1\land\psi_1$ are each satisfiable. Then we set $\Phi_{n+1}=\Phi_n\cup\{\psi_0\restriction^{c_0}_{\bar{c}},\psi_1\restriction^{c_1}_{\bar{c}}\}$ and find a model of $\Phi_{n+1}$ in $\mbC$ as before. Thus we may assume that there are no such $\psi_0,\psi_1$. In this case
$\chi_0\land\chi_1$ is satisfiable and forces unity. It is satisfiable because otherwise $\chi_0$ and $\chi_1$ themselves serve as the witnesses $\psi_0$ and $\psi_1$. If $\chi_0\land\chi_1$ did not force unity, then there would be incompatible $\psi_0,\psi_1\leq\chi_0\land\chi_1$ which again we assumed was not the case. So $\chi_0\land\chi_1$ is satisfiable and forces unity, and we may set $\Phi_{n+1}=\Phi_n\cup\{(\chi_0\land\chi_1)\restriction^{c_0}_{\bar{c}}\land(\chi_0\land\chi_1)\restriction^{c_1}_{\bar{c}}\}$ and find a model in $\mbC$ as before.\qedhere


\end{description}
\end{proof}

\subsection{Verification}

In this section we prove that a generic tree with property $(*)$ has Scott rank at most $\alpha+1$.

\begin{lem}
Suppose $\sigma\in\A^*$ forces unity, $A,B$ are generic countable trees, $\bar{a}\in A,\bar{b}\in B$ are finite subtrees containing the root, and $A\models~\sigma\restriction^{a_0}_{\bar{a}},B\models\sigma\restriction^{b_0}_{\bar{b}}$. Further suppose for any finite subtree $\bar{a}' \in A^{a_0}_{\bar{a}}$ containing $a_0$, and any $i$, $A^{a_{i}'}_{\bar{a}\bar{a}'}$ satisfies a sentence that forces unity. Then the same is true of every finite subtree $\bar{b}'\in B^{b_0}_{\bar{b}}$ containing~$b_0$ and every $i$, that is, $B^{b_{i}'}_{\bar{b}\bar{b}'}$ satisfies a sentence that forces unity.
\end{lem}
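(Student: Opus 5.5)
We argue by contradiction, transferring a failure from $B$ to $A$ through the unity of $\sigma$. Suppose some finite subtree $\bar{b}'\in B^{b_0}_{\bar{b}}$ containing $b_0$ and some index $i$ are such that $B^{b_i'}_{\bar{b}\bar{b}'}$ satisfies no sentence forcing unity. By genericity condition (1*), applied to the subtree $\bar{b}\bar{b}'$ of $B$ and the entry $b_i'$, the tree $B^{b_i'}_{\bar{b}\bar{b}'}$ satisfies some $\rho\in\A^*$ that forces unity or splitting. It cannot force unity, so $\rho$ forces splitting. The aim is to pack this into a single $\A^*$ sentence about $B^{b_0}_{\bar{b}}$. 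That sentence should be compatible with $\sigma$, and any model of it should contain a subtree of the same shape whose $i$-th descendant tree satisfies $\rho$.

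\textbf{Building the sentence $\tau$.} Apply Lemma \ref{3.2} inside $B^{b_0}_{\bar{b}}$, with the $\E_\alpha$ formula expressing ``$\bar{x}\cong_{\rtree}\bar{b}'$ and $\rho\restriction^{x_i}_{\bar{x}}$''. Concretely, set
\[ \tau := \exists \bar{x}\,\bigl[\bar{x}\cong_{\rtree}\bar{b}' \wedge \rho\restriction^{x_i}_{\bar{x}}\bigr]. \]
This is $\E_\alpha$, and we need to arrange that it lies in $\A$; here closure of $\A$ under restriction and existential quantification (Lemma \ref{4.2}(1)) is what is used. Then $B^{b_0}_{\bar{b}}\models\tau\wedge\sigma$. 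By genericity (2) for $A$ at $\bar{a}$, either $A^{a_0}_{\bar{a}}\models\tau\wedge\sigma$, or $A^{a_0}_{\bar{a}}$ satisfies some $\chi\in\A^*$ incompatible with $\tau\wedge\sigma$.

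\textbf{First case: $A^{a_0}_{\bar{a}}\models\chi$ with $\chi$ incompatible with $\tau\wedge\sigma$.} Both $\chi\wedge\sigma$ and $\tau\wedge\sigma$ are satisfiable sentences below $\sigma$, so they lie in $\A^*$ and are $\leq\sigma$. Since $\sigma$ forces unity, their conjunction is satisfiable. That contradicts the incompatibility of $\chi$ with $\tau\wedge\sigma$, so this case cannot occur.

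\textbf{Second case: $A^{a_0}_{\bar{a}}\models\tau$.} The witness gives a subtree $\bar{a}'\in A^{a_0}_{\bar{a}}$ containing $a_0$ with $A^{a_i'}_{\bar{a}\bar{a}'}\models\rho$. By hypothesis, $A^{a_i'}_{\bar{a}\bar{a}'}$ also satisfies some $\mu$ that forces unity. Then $\rho\wedge\mu$ is satisfiable and lies below both $\rho$ and $\mu$. Since $\rho$ forces splitting, there are incompatible $\psi_0,\psi_1\le\rho\wedge\mu$. But $\psi_0,\psi_1\le\mu$, and $\mu$ forces unity, so $\psi_0\wedge\psi_1$ is satisfiable, a contradiction. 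Both cases fail, which proves the lemma.

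\textbf{Expected obstacle.} The delicate points are bookkeeping about $\A$. First, $\tau$, $\tau\wedge\sigma$, $\chi\wedge\sigma$ and $\rho\wedge\mu$ must all be members of $\A$, so that genericity and the forcing notions apply to them; this needs $\A$ closed under finite conjunction, existential quantification and restriction. Second, the restriction of $\rho$ to the coordinate $x_i$ must be set up correctly, using Remark \ref{rem:double-res} to identify $(A^{a_0}_{\bar{a}})^{a_i'}_{\bar{a}'}$ with $A^{a_i'}_{\bar{a}\bar{a}'}$. If $\tau$ cannot be placed in $\A$ directly, I would instead obtain a suitable $\theta$ from Lemma \ref{4.2}(3), applied to a representative tree in $\mbC$.
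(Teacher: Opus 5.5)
Your proof is correct and follows the paper's argument. You get a splitting sentence $\rho$ from genericity, transfer $\exists\bar{x}\,[\bar{x}\cong_{\rtree}\bar{b}'\wedge\rho\restriction^{x_i}_{\bar{x}}]$ to $A^{a_0}_{\bar{a}}$ using genericity (2) together with the unity of $\sigma$, and conclude that no tree satisfies both a splitting and a unity sentence; you simply spell out the two steps the paper compresses into single sentences. Note also that $\tau\in\A$ already follows from the existential-fragment closure properties together with Lemma \ref{4.2}(1), so your fallback via Lemma \ref{4.2}(3) is not needed.
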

\begin{proof}
Assume for the sake of contradiction that there is some finite subtree $\bar{b}'\in B^{b_0}_{\bar{b}}$ containing~$b_0$ and some $i$ such that $B^{b_{i}'}_{\bar{b}\bar{b}'}$ does not satisfy a sentence that forces unity. By genericity, $B^{b_{i}'}_{\bar{b}\bar{b}'}$ satisfies a sentence that forces splitting, say $\psi$. Thus $B^{b_0}_{\bar{b}}\models\exists\bar{x}\ [ (\bar{x}\cong_{\rtree}\bar{b}') \; \land \; (\psi\restriction^{x_{i}}_{\bar{x}})]$. 
Since $A$ is generic and $A^{a_0}_{\bar{a}}$ satisfies $\sigma$ which forces unity, it also satisfies $\exists\bar{x}\ [ (\bar{x}\cong_{\rtree}\bar{b}') \land (\psi\restriction^{x_{i}}_{\bar{x}})]$. If $\bar{a}'\in A^{a_0}_{\bar{a}}$ is its witness, then $A^{a_{i}'}_{\bar{a}\bar{a}'}$ both satisfies a sentence that forces splitting and unity, a contradiction.
\end{proof}

\begin{defn}
    We say a sentence $\sigma$ is Scott-like if it forces unity and for every $T\models\sigma$ and finite subtree $\bar{a}\in T$ that contains the root, $T^{a_0}_{\bar{a}}$ satisfies a sentence in $\A^*$ that forces unity. We say a tree $T$ is Scott if it satisfies a Scott-like sentence.
\end{defn}

\begin{lem}
\label{5.4.1}
    Suppose $\sigma\in\A^*$ is Scott-like, $A,B$ are generic trees, and $\bar{a}\in A,\bar{b}\in B$ are finite subtrees containing the root. Then if $A^{a_0}_{\bar{a}},B^{b_0}_{\bar{b}}\models\sigma$ then $A^{a_0}_{\bar{a}}\cong B^{b_0}_{\bar{b}}$.
\end{lem}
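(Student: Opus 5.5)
We build the isomorphism by a back-and-forth argument between $A^{a_0}_{\bar{a}}$ and $B^{b_0}_{\bar{b}}$. The maintained invariant is a pair of finite subtrees $\bar{a}' \in A^{a_0}_{\bar{a}}$ and $\bar{b}' \in B^{b_0}_{\bar{b}}$, both containing the roots $a_0,b_0$, with $\bar{a}'\cong_{\rtree}\bar{b}'$. In addition, for each $i$ there should be a single sentence $\sigma_i\in\A^*$ that forces unity and holds in both $A^{a_i'}_{\bar{a}\bar{a}'}$ and $B^{b_i'}_{\bar{b}\bar{b}'}$. At the start, $\bar{a}'=(a_0)$, $\bar{b}'=(b_0)$ and $\sigma_0=\sigma$. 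Since the isomorphism type of $A^{a_0}_{\bar{a}}$ is determined by the finite tree $\bar{a}'$ together with the types of the $A^{a_i'}_{\bar{a}\bar{a}'}$, the union of the maps $\bar{a}'\mapsto\bar{b}'$ along a dovetailed enumeration of both (countable) trees gives the required isomorphism. By symmetry it suffices to describe the forth step.

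\textbf{Forth step.} Given a new element $c\in A^{a_0}_{\bar{a}}$, we may add its ancestors, so it suffices to handle a single new child $c$ of some $a_i'$, i.e.\ $c$ a child of the root of $A^{a_i'}_{\bar{a}\bar{a}'}$. Write $\bar{c}=\bar{a}'c$. Since $\sigma$ is Scott-like, applied to $\bar{a}\bar{a}'c$ inside the model $A^{a_0}_{\bar{a}}$ (reindexing as in (1*)), the tree $A^{c}_{\bar{a}\bar{c}}$ satisfies some $\tau\in\A^*$ forcing unity. Likewise $A^{a_i'}_{\bar{a}\bar{c}}$ satisfies some $\tau'\in\A^*$ forcing unity. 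Now $A^{a_i'}_{\bar{a}\bar{a}'}$ satisfies the sentence
\[ \rho := \exists x\,[P(x)=r \wedge \tau\restriction^{x}_{rx}\wedge\tau'\restriction^{r}_{rx}], \]
which is $\E_\alpha$; by closure property (1) of $\A$ and the fact that $\A$ is an existential fragment, $\rho$ (or a suitable version of it) lies in $\A^*$.

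The key claim is that $B^{b_i'}_{\bar{b}\bar{b}'}\models\rho$ as well. By genericity condition (2) applied at $b_i'$, either $\rho$ holds there, or some $\chi\in\A^*$ incompatible with $\rho$ holds there. In the latter case, $\chi\wedge\sigma_i$ and $\rho\wedge\sigma_i$ are both satisfiable and both $\leq\sigma_i$, and $\A$ is closed under finite conjunctions. Since $\sigma_i$ forces unity, $(\chi\wedge\sigma_i)\wedge(\rho\wedge\sigma_i)$ is satisfiable, contradicting the incompatibility of $\chi$ and $\rho$.

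So pick a witness $d$ in $B$, a child of $b_i'$, with $B^{d}_{\bar{b}\bar{b}'d}\models\tau$ and $B^{b_i'}_{\bar{b}\bar{b}'d}\models\tau'$. Extend to $\bar{a}'c$ and $\bar{b}'d$. The new pieces $A^c,B^d$ share $\tau$, the pieces $A^{a_i'},B^{b_i'}$ now share $\tau'$, and all other pieces are unchanged by Remark~\ref{rem:double-res}. Thus the invariant is preserved.

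\textbf{Main obstacle.} The delicate point is justifying that the Scott-like hypothesis applies to every $A^{x}_{\bar{a}\bar{a}'}$, not just to the piece at the root. This is because Scott-likeness is stated only for the $a_0$-piece, whereas here we need the non-root pieces, which is exactly what the preceding lemma transfers. I would invoke that lemma (with $\sigma$ forcing unity) to move the hypothesis from $A$ to $B$ for the back steps. A secondary obstacle is membership of $\rho$ in $\A$; this needs either that $\A$ is closed under these restrictions and conjunctions, or else replacing $\rho$ by the $\theta$'s supplied by Lemma~\ref{4.2}(3).
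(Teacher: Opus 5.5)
Your proposal is correct and follows essentially the same route as the paper. Both use the same back-and-forth family (isomorphic finite subtrees whose corresponding pieces share a sentence forcing unity), and both make the forth step by transferring an existential sentence from the $A$-piece to the $B$-piece using genericity condition (2) and the fact that $\sigma_i$ forces unity. The differences are minor: you add one child at a time where the paper adds the whole path $\bar{c}$ at once, and your conjunction-with-$\sigma_i$ step states the contradiction more precisely than the paper does. The appeal to the preceding lemma in your last paragraph is unnecessary. Scott-likeness quantifies over every model of $\sigma$ and every ordering of the subtree, so it applies to $B^{b_0}_{\bar{b}}$ directly and the back step is literally symmetric.
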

\begin{proof}
We will define a back-and-forth family $F$ consisting of pairs of tuples from $A^{a_0}_{\bar{a}}$ and from $B^{b_0}_{\bar{b}}$. We put $(\bar{u},\bar{v})\in F$ if $\bar{u}\in A^{a_0}_{\bar{a}}$ and $\bar{v}\in B^{b_0}_{\bar{b}}$ are finite subtrees containing the root, $\bar{u}\cong_{\rtree}\bar{v}$, and for each $i$ both $A^{u_i}_{\bar{a}\bar{u}}$ and $B^{v_i}_{\bar{b}\bar{v}}$ satisfy the same sentence that forces unity. We prove that $F$ is a back-and-forth family. Clearly $(a_0,b_0)\in F$. Suppose $(\bar{u},\bar{v})\in F$ where $\sigma_i$ are sentences forcing unity with $A^{u_i}_{\bar{a}\bar{u}} \models \sigma_i$ and $B^{v_i}_{\bar{b}\bar{v}} \models \sigma_i$, and let $u'\in A$. Suppose the closest ancestor of $u'$ in $\bar{u}$ is $u_i$ and let $\bar{c}\in A$ be the smallest subtree containing $u_i$ and $u'$. Thus for each $c_j$ there is a some $\psi_j$ that forces unity and $A^{c_j}_{\bar{a}\bar{u}\bar{c}}\models\psi_j$. Define
\[\eta:=\exists\bar{x}\ \left[ \left(\bar{x}\cong_{\rtree}\bar{c}\right) \;\land \; \left(\bigwedge_{j<\len(\bar{c})}\psi_j\restriction^{x_j}_{\bar{x}} \right) \right].\]
Then $A^{u_i}_{\bar{a}\bar{u}}\models\eta$ and thus $\vp_i\land\eta$ is consistent. If $B^{v_i}_{\bar{b}\bar{c}}\nvDash\eta$ then by genericity, there is some sentence $\theta$ such that $B^{v_i}_{\bar{b}\bar{c}}\models\theta$ but $\theta$ and $\eta$ is incompatible. This contradicts that $\vp_i$ forces unity, and thus $B^{v_i}_{\bar{b}\bar{c}}\models\eta$. If $\bar{d}$ is the witness of $B^{v_i}_{\bar{b}\bar{c}}$ to $\eta$, then $(\bar{u}\bar{c},\bar{v}\bar{d})\in F$. The other direction follows similarly, proving that $F$ is a back-and-forth family.
\end{proof}

\begin{lem}\label{lem:final}
Each automorphism orbit of a generic tree with property $(*)$ is definable by a $\me_{\alpha+1}$ sentence. In particular, such a tree has a $\Pi_{\alpha+2}$ Scott sentence and hence has Scott rank at most $\alpha+1$.
\end{lem}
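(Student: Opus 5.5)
Fix a generic tree $T$ with property $(*)$ and a finite subtree $\bar{a}\in T$ containing the root. The plan is to write down an $\me_{\alpha+1}$ formula $\Theta_{\bar{a}}(\bar{x})$ and show that it defines the automorphism orbit of $\bar{a}$. The ``in particular'' clause then follows from the theorem of \cite{CGHT} quoted at the start of this section.

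\textbf{The defining formula.} By genericity (1*), each descendant tree $T^{a_i}_{\bar{a}}$ satisfies some $\chi_i\in\A^*$ that forces unity or forces splitting. I would first aim for
\[\Theta_{\bar{a}}(\bar{x}) := (\bar{x}\cong_{\rtree}\bar{a}) \wedge \bigwedge_i \chi_i\restriction^{x_i}_{\bar{x}} \wedge \bigwedge_i \rho_i(\bar{x}),\]
where each $\rho_i$ is an $\ma_\alpha$ (hence $\me_{\alpha+1}$) clause that pins down $T^{x_i}_{\bar{x}}$ up to isomorphism. The natural choice for $\rho_i$ is a conjunction, over all $\psi\in\A^*$ with $T^{a_i}_{\bar{a}}\not\models\psi$, of $\neg\psi\restriction^{x_i}_{\bar{x}}$. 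Each $\psi$ is $\me_\alpha$, so its negation is $\ma_\alpha$, and the countable conjunction stays $\ma_\alpha\subseteq\me_{\alpha+1}$. The existentially quantified parts are $\me_\alpha$. The whole formula is therefore $\me_{\alpha+1}$ (in fact $\vwA_\alpha$-like), and it is first-order in $\bar{x}$ because we only relativize with the $\Sigma_1$ formulas $\eta_i$.

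\textbf{Showing $\Theta_{\bar{a}}$ cuts out the orbit.} Suppose $T\models\Theta_{\bar{a}}(\bar{b})$. By genericity (2), for every $\psi\in\A^*$ the tree $T^{b_i}_{\bar{b}}$ either satisfies $\psi$ or satisfies some $\chi$ incompatible with $\psi$. Together with the $\rho_i$ clauses, this gives that $T^{a_i}_{\bar{a}}$ and $T^{b_i}_{\bar{b}}$ satisfy exactly the same sentences of $\A^*$. It then suffices to prove $T^{a_i}_{\bar{a}}\cong T^{b_i}_{\bar{b}}$ for each $i$, since gluing these isomorphisms along $\bar{a}\cong_{\rtree}\bar{b}$ gives an automorphism carrying $\bar{a}$ to $\bar{b}$.

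I would do this by a back-and-forth as in Lemma~\ref{5.4.1}. Take the family of pairs $(\bar{u},\bar{v})$ of finite subtrees below $a_i$ and $b_i$ such that corresponding descendant trees agree on all $\A^*$ sentences. To extend a pair by a new element $u'$, use the existential sentence built from the $\A^*$-types of the pieces of the enlarged tuple, as in the definition of $\eta$ in Lemma~\ref{5.4.1}. That sentence lies in $\A$ by Lemma~\ref{4.2}(1) and (3), via Lemma~\ref{3.2}. Since the two sides agree on it, we obtain a witness $\bar{d}$ on the other side. We then need the pieces for $\bar{u}\bar{c}$ and $\bar{v}\bar{d}$ to again agree on all of $\A^*$.

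\textbf{Main obstacle.} Genuine agreement on every $\A^*$ sentence is not forced by a single witness, and this is where splitting versus unity and property $(*)$ must enter. In the unity (Scott-like) case, Lemma~\ref{5.4.1} and the preceding lemma already give the isomorphism directly. In the splitting case I would use property $(*)$ applied to $\bar{a}$ and $\bar{b}$ at the same level. If $a_i\neq b_i$ and they are separated by incompatible $\psi_0,\psi_1$, this contradicts agreement on $\A^*$. Otherwise they share a unity-forcing sentence, which reduces to the unity case. When $a_i=b_i$ the identity works.

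Handling descendants that are not at the same level as any $a_j$, and ensuring that the partial maps in the back-and-forth remain in the family, is the step I expect to be delicate. I would try to carry it out by always applying $(*)$ to the new sibling pieces created at each extension step.
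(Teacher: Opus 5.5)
\textbf{There is a genuine gap.} It sits in the step where you say that if $a_i\neq b_i$ share a unity-forcing sentence, this ``reduces to the unity case.'' Lemma~\ref{5.4.1} needs a \emph{Scott-like} sentence, i.e.\ unity at every sub-piece. A piece can satisfy a unity-forcing $\sigma$ while some sub-piece $T^{e_0}_{\bar a\bar e}$ (for a finite subtree $\bar e\subseteq T^{a_i}_{\bar a}$ containing $a_i$) satisfies a splitting sentence $\psi$. In that situation, with $a_i\neq b_i$, the pieces $T^{a_i}_{\bar a}$ and $T^{b_i}_{\bar b}$ are provably \emph{not} isomorphic.

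To see this, suppose $f$ were an isomorphism. It would send $e_0$ to a different node $f(e_0)$ at the same level, and $T^{f(e_0)}_{\bar b f(\bar e)}$ would be isomorphic to $T^{e_0}_{\bar a\bar e}$. Property $(*)$ would then give one of two things, and both are impossible:
\begin{itemize}
\item incompatible sentences for these two pieces, which cannot happen for isomorphic trees;
\item a shared unity-forcing $\tau$, which cannot happen because $\psi$ splits, so $\psi\wedge\tau$ would have incompatible refinements, and these would also refine $\tau$.
\end{itemize}
On the other hand, both pieces satisfy the unity-forcing $\sigma$ and $T$ is generic, so they have exactly the same $\A^*$-theory. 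Your $\chi_i$ and $\rho_i$ clauses therefore cannot exclude such a $b_i$, and your back-and-forth cannot be completed. Nothing in the plan shows that some other piece must detect the difference. The underlying problem is that $\Theta_{\bar a}$ records only level-$\alpha$ information (the $\A^*$-type of each piece), and this cannot locate a splitting node. Note also that the relevant case split is Scott-like versus not Scott-like, not unity versus splitting at the top node of the piece.

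\textbf{The missing idea} is to use genuinely $\me_{\alpha+1}$ information to pin down position. The paper proceeds in two cases:
\begin{itemize}
\item If $T^{a_i}_{\bar a}$ satisfies a Scott-like sentence, Lemma~\ref{5.4.1} handles any $b_i$.
\item Otherwise, choose $\bar e$ as above. For every finite subtree $\bar c$ of $T$ with $c_0\neq e_0$ at the level of $e_0$, use $(*)$ to get $\psi_{\bar c}$ true in $T^{c_0}_{\bar c}$ and false in $T^{e_0}_{\bar a\bar e}$. Then set $\vp_i:=\exists\bar y\,[(\bar y\cong_{\rtree}\bar e)\wedge\bigdoublewedge_{\bar c}\neg\psi_{\bar c}\restriction^{y_0}_{\bar y}]$. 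This is an existential over an $\ma_\alpha$ conjunction indexed by tuples of $T$, not a sentence of $\A$. It forces the witness to be $e_0$ itself, and hence $b_i=a_i$.
\end{itemize}

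\textbf{A secondary error.} Your remark ``when $a_i=b_i$ the identity works'' is wrong as stated. The tuples $\bar a$ and $\bar b$ may differ below $a_i$, so $T^{a_i}_{\bar a}$ and $T^{b_i}_{\bar b}$ are different subtrees. The paper handles this by induction from the leaves of $\bar a$. The full subtrees below corresponding children in $\bar a$ and $\bar b$ are isomorphic by the induction hypothesis. Cancelling these finitely many isomorphism types from the common multiset of child-subtree types of $a_i=b_i$ then gives $T^{a_i}_{\bar a}\cong T^{b_i}_{\bar b}$.
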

\begin{proof}
Let $T$ be a generic tree with property ($*$). If we can show that each automorphism orbit of $T$ is definable by an $\me_{\alpha+1}$ sentence, then by Theorem 7.7 of \cite{CGHT}, $T$ has a $\Pi_{\alpha+2}$ Scott sentence.

Let $\bar{a}=(a_0,\dots,a_{n-1})\in T$ be a finite subtree containing the root. For each $i<n$ we define $\vp_i$ the following way. If $T^{a_i}_{\bar{a}}\models\psi$ where $\psi$ is Scott-like, then set $\vp_i:=\psi$. Otherwise, there's some $\bar{b}\in T^{a_i}_{\bar{a}}$ where $T^{b_0}_{\bar{a}\bar{b}}\models\psi$ for some $\psi$ that forces splitting. By property $(*)$, for each finite tree $\bar{c}\in T$ where $c_0$ is of the same distance from the root as $b_0$, there is some $\psi_{\bar{c}}$ such that $T^{b_0}_{\bar{a}\bar{b}}\models\neg\psi_{\bar{c}}$ but $T^{c_0}_{\bar{c}}\models\psi_{\bar{c}}$. Using these, set
\[\vp_i:=\exists\bar{y} \; \left[ \left(\bar{y}\cong_{\rtree}\bar{b} \right) \; \land \; \bigdoublewedge_{\bar{c}}\neg\psi_{\bar{c}}\restriction^{y_0}_{\bar{y}}\right].\]
For each $i$ we have $T_{\bar{a}}^{a_i} \models \varphi_i$.

Now define $\theta(\bar{x})$ by
\[\theta(\bar{x}):=\left(\bar{x}\cong_{\rtree}\bar{a}\right) \;\land \; \bigwedge_{i<n}\vp_i\restriction^{x_i}_{\bar{x}}.\]
Since each $\vp_i$ is either $\me_{\alpha}$ or $\ma_{\alpha}$, $\theta(\bar{x})$ is $\me_{\alpha+1}$. Clearly $T\models\theta(\bar{a})$. We prove that $\theta(\bar{x})$ defines the automorphism orbit of $\bar{a}.$ Suppose $T\models\theta(\bar{d})$ for $\bar{d}\in T$, and we want to show that $(T,\bar{a}) \cong (T,\bar{d})$. We have, for each $i$, $T_{\bar{d}}^{d_i} \models \varphi_i$. Since we already have $\bar{d}\cong_{\rtree}\bar{a}$, it remains to prove $T^{d_i}_{\bar{d}}\cong T^{a_i}_{\bar{a}}$ for each $i<n$. We induct on the tree structure of $\bar{a}$, starting with the leaves as the base case. Suppose $a_i$ is a leaf of $\bar{a}$. If $\vp_i$ is Scott-like, $T^{d_i}_{\bar{d}}\cong T^{a_i}_{\bar{a}}$ by Lemma \ref{5.4.1}. If $\vp_i$ is not Scott-like, if $a_i = d_i$ then since they are leaves we have $T_{\bar{a}}^{a_i} = T_{\bar{d}}^{d_i}$. We argue that $a_i = d_i$ by contradiction; suppose to the contrary that that $a_i \neq b_i$. For $\bar{b}$ the tuple in $T_{\bar{a}}^{a_i}$ used in the definition of $\varphi_i$, we have $T^{d_i}_{\bar{d}}\models\exists \bar{y} \; [(\bar{y}\cong_{\rtree}\bar{b}) \; \land \; \bigdoublewedge_{\bar{c}}\neg\psi_{\bar{c}}\restriction^{y_0}_{\bar{y}}]$. Since $a_i$ and $d_i$ are of the same distance from the root and $a_i\neq d_i$, for any possible witness $\bar{y} = \bar{c}$ in $T^{d_i}_{\bar{d}}$ we would have $b_0 \neq c_0$ as they extend $a_i$ and $b_i$ respectively. Also, $b_0$ and $c_0$ are of the same height. Thus $T_{\bar{d}}^{d_0} \models \psi_{\bar{c}}\restriction^{c_0}_{\bar{c}}$, contradicting that $\bar{y} = \bar{c}$ was a witness to $T_{\bar{d}}^{d_0} \models \varphi_i$.

Now for the inductive case, suppose $T^{d_j}_{\bar{d}}\cong T^{a_j}_{\bar{a}}$ for every descendant $a_j$ of $a_i$ and $d_j$ or $d_i$. We must argue that $T_{\bar{a}}^{a_i} \cong T_{\bar{d}}^{d_i}$. If $\vp_i$ is Scott-like, $T^{d_i}_{\bar{d}}\cong T^{a_i}_{\bar{a}}$. If not, by similar reasoning as before, $a_i=d_i$. This part of the argument in the previous paragraph did not use the assumption that $a_i$ and $d_i$ were leaves; it was only to then argue that if $a_i = d_i$ then $T^{d_i}_{\bar{d}}\cong T^{a_i}_{\bar{a}}$ that we used this fact. We will now argue for this same fact but this time we must use the inductive hypothesis. Consider the children of $a_i = d_i$. Without loss  of generality let the children of $a_i$ which also appear in $\bar{a}$ be $a_0,\dots,a_k$. Similarly, the children of $d_i$ which also appear in $\bar{d}$ are $d_0,\dots,d_k$. Using the induction hypothesis, the full subtrees of $T$ below each of $a_0,\dots,a_k$ are isomorphic to the full subtrees of $T$ below each of $d_0,\ldots,d_k$ respectively. The isomorphism type of $T_{\bar{a}}^{a_i}$ is determined by the isomorphism types of the full subtrees of $T$ below the children of $a_i$ which do not appear in $\bar{a}$, and the isomorphism type of $T_{\bar{d}}^{d_i}$ is determined by the isomorphism types of the full subtrees of $T$ below the children of $d_i$ which do not appear in $\bar{d}$. Since $a_i = d_i$, the isomorphism types of full subtrees of $T$ below the children of $a_i$ are the same as the isomorphism types of full subtrees of $T$ below the children of $d_i$, counting multiplicity. Taking away the isomorphism types corresponding to $a_0,\ldots,a_k$ on one side and $d_0,\ldots,d_k$ on the other, we still have the same isomorphism types. Thus, we conclude that $T_{\bar{a}}^{a_i} \cong T_{\bar{d}}^{d_i}$.
\end{proof}

\begin{remark}\label{rem:diff}
    A key fact for the analogue of Lemma \ref{lem:final} for linear orders, in the treatment overlapping intervals, was a theorem of Lindenbaum and Tarski \cite{LindenbaumTarski}: If $K$ and $L$ are linear order, and for each $n$ the product order $K \cdot n$ is an initial segment of $L$, then $K \cdot \omega$ is an initial segment of $L$ and so $K + L \cong L$. No analogous fact is required for trees. Instead, we are able to argue that we do not need to consider overlapping trees. This is responsible for the improved upper bounds we obtain.
\end{remark}

\section{Lower bounds}

Here we give a lower bound: We find a $\Pi_2$ theory of trees with no models with $\Sigma_3$ Scott sentence. 

\begin{prop}\label{prop:example}
There is a satisfiable $\Pi_2$ sentence $\vp$ extending the axioms of trees such that every model of $\vp$ has a $\Pi_3$ Scott sentence but no $\Sigma_3$ Scott sentence. 
\end{prop}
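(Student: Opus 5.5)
The plan is to take a height-$2$ tree in which a model is coded by the multiset of numbers of children of the level-$1$ nodes, with the axioms forcing these numbers to be unbounded. Let $\varphi$ be the conjunction of the following:
\begin{enumerate}
\item the axioms of trees;
\item every node is at distance at most $2$ from the root, so nodes at level $2$ are leaves;
\item the root has infinitely many children, and every child of the root has at least one child;
\item for each $n$, $\exists x\,[P(x)=r \wedge x \text{ has at least } n \text{ children}]$.
\end{enumerate}
Each conjunct is $\Pi_2$: clause (4) is a countable conjunction of $\Sigma_1$ sentences, and ``infinitely many children'' is a $\forall\exists$ statement. Satisfiability is witnessed by the model with one level-$1$ node having exactly $n$ children for each $n\geq 1$. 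A model $A$ of $\varphi$ is determined up to isomorphism by the function $m_A$ sending each $k\in\{1,2,\ldots,\omega\}$ to the number, in $\{0,1,\ldots,\omega\}$, of level-$1$ nodes with exactly $k$ children.

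\textbf{Upper bound.} For each finite $k$, ``$x$ has exactly $k$ children'' is $\Sigma_1\wedge\Pi_1$, and ``$x$ has infinitely many children'' is $\Pi_2$. Hence ``there are at least $m$ level-$1$ nodes with exactly $k$ children'' is $\Sigma_2$, its negation is $\Pi_2$, and ``there are infinitely many such nodes'' is $\Pi_3$. The sentence
\[\forall x\,[P(x)=r \rightarrow \textstyle\bigdoublevee_k (x \text{ has exactly } k \text{ children}) \vee (x \text{ has infinitely many children})]\]
is $\Pi_3$. A $\Pi_3$ Scott sentence for $A$ is then the conjunction of $\varphi$, this sentence, and, for each $k\in\{1,\ldots,\omega\}$, the $\Pi_3$ sentence pinning down $m_A(k)$. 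When $k=\omega$, the statements ``at least $m$ nodes with infinitely many children'' are $\Sigma_3$ rather than $\Sigma_2$, so this case needs a separate check, or a small modification of $\varphi$ (for example forbidding level-$1$ nodes with infinitely many children altogether, which is not $\Pi_2$, so more likely handling it by padding).

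\textbf{No $\Sigma_3$ Scott sentence.} By Montalb\'an's characterization, it is enough to show that for every model $A$ and every finite subtree $\bar a$ there are a model $B\not\cong A$ and $\bar b\in B$ with $(A,\bar a)\leq_2(B,\bar b)$. Then no $\Pi_2$ formula true of $\bar a$ can isolate $A$, so no sentence $\exists\bar x\,\psi(\bar x)$ with $\psi$ in $\Pi_2$ is a Scott sentence. By Lemma~\ref{3.1}, the relation $\leq_1$ between tuples amounts to this: the finite trees match, and whenever a node $c$ in the tuple on the ``more $\Sigma_1$-facts'' side has at least $n$ children, the corresponding node $d$ has at least $n$ children. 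Unfolding $\leq_2$ then gives a two-move game. Spoiler picks $\bar d$ in $B$, and Duplicator must answer with $\bar c$ in $A$ matching its shape, such that each relevant node of $\bar c$ has at most as many children as its counterpart in $\bar d$ but at least as many as are listed in $\bar d$. Because $A$ has unboundedly large finite counts, Duplicator can usually answer with some level-$1$ node of $A$ outside $\bar a$ whose count is large enough but not too large.

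I would construct $B$ from $A$ by adding a single new level-$1$ node with $\omega$ children. Any $\bar d$ touching this node is answered by a fresh node of $A$ with a finite but sufficiently large count. Any $\bar d$ not touching it is answered identically.

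\textbf{Main obstacle.} This $B$ fails to be non-isomorphic to $A$ exactly when $m_A(\omega)=\omega$. In that case I would instead add a node with $k$ children for a suitable finite $k$. Taking $k$ with $m_A(k)$ finite changes the isomorphism type. The difficulty is that Spoiler may list all $k$ children of the new node, or several nodes of count $k$ at once, forcing Duplicator to find in $A$ a node of count exactly $k$ outside $\bar a$ that is not already used. The remaining case is when every count occurring in $A$ occurs infinitely often, and there I would remove or add a count not present. If this case analysis cannot be closed, the fallback is to modify $\varphi$, for instance by adding a third level, so that the counts are coded more robustly and the duplication step always works.
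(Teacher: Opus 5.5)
\textbf{There is a genuine gap.} Your sentence $\varphi$ is not an example of the proposition. The cases you set aside as needing ``a separate check'' are exactly where it fails.

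Nothing in $\varphi$ rules out level-$1$ nodes with infinitely many children, and nothing can. ``Every child of the root has finitely many children'' is only $\Pi_3$. Your own Duplicator argument shows that adding a node with $\aleph_0$ children to a model whose counts are all finite and unbounded preserves every $\Pi_2$ sentence. This gives genuine counterexamples.
(i) Let $A_0$ be the tree in which the root has $\aleph_0$ children, each with $\aleph_0$ children. It satisfies $\varphi$, and $\varphi\wedge\forall x\,[P(x)=r\rightarrow\bigdoublewedge_n\exists^{\geq n}y\,P(y)=x]$ is a $\Pi_2$ Scott sentence for it. So $A_0$ has a $\Sigma_3$ Scott sentence. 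Also no suitable $B$ exists: any $B$ with $(A_0,\bar a)\leq_2(B,\bar b)$ satisfies this $\Pi_2$ sentence and is therefore isomorphic to $A_0$.
(ii) Clause (4) can be met by a single node with infinitely many children, so the finite counts need not be unbounded. Suppose one level-$1$ node has $\aleph_0$ children and all others have exactly one. Then $\exists x\,[P(x)=r\wedge x\text{ has infinitely many children}\wedge\forall z\,((P(z)=r\wedge z\neq x)\rightarrow z\text{ has exactly one child})]$, together with the $\Pi_2$ axioms, is a $\Sigma_3$ Scott sentence. Here your move ``answer with a fresh node of large finite count'' is unavailable.
(iii) The upper bound fails too. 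Take one node $x_n$ with exactly $n$ children for each $n\geq 1$, plus one node $x_\omega$ with $\aleph_0$ children. The orbit $\{x_\omega\}$ is not $\Sigma_2$-definable. Indeed, suppose $\bar d$ witnesses a $\Sigma_2$ formula $\exists\bar y\,\psi(x,\bar y)$ at $x_\omega$. Move $x_\omega$ and its children listed in $\bar d$ to some $x_N\notin\bar d$ with $N$ larger than every count in $\bar d$. This preserves the $\Pi_1$ matrix $\psi$. By Montalb\'an's theorem, this model has no $\Pi_3$ Scott sentence.

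The root problem is that in a child-count coding, the $\Pi_1$ information about a node is only an upper bound on its count, and that bound may be vacuous.

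What is missing is a choice of $\Pi_2$ axioms under which, in \emph{every} model, each relevant node is singled out by a $\Pi_1$ condition, but no $\Sigma_1$ condition over finitely many parameters singles it out. The paper achieves this with coloured trees of height $1$. The axioms say that the root has infinitely many children, any two children differ on some colour $R_n$, and every finite colour pattern is realized.
\begin{itemize}
\item Every tuple is defined by the $\Pi_1$ formula $\bigwedge_i\bigdoublewedge_n(\neg)^{\sigma_i(n)}R_n(x_i)$. So all orbits are $\Sigma_2$, and every model has a $\Pi_3$ Scott sentence.
\item A $\Sigma_1$ formula $\exists\bar y\,\psi(x,\bar y,\bar p)$ defining a child $b$ mentions only finitely many colours. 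Another child outside $\bar p$ that agrees with $b$ on those colours also satisfies it. Hence no orbit is $\Sigma_1$ over parameters, and there is no $\Sigma_3$ Scott sentence.
\end{itemize}
The result is then transferred to ordinary trees by a uniform $\Sigma_1$ bi-interpretation. It codes colours by gadgets that are both $\Sigma_1$- and $\Pi_1$-definable. Your fallback of ``adding a third level'' would have to accomplish exactly this, and as written the proposal does not supply a working example.
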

\begin{proof}
Consider a language with the parent relation $P$, constant symbol $r$ for the root, and unary predicates $R_i$ for each $i<\omega$. One may think of the $R_i$ as colours, and we call a tree with the additional relations $R_i$ a coloured tree. We can transform any coloured tree into a tree, and these two structures are effectively $\Sigma_1$ bi-interpretable with the bi-interpretation being uniform among all coloured trees. (See \cite{MBook} or \cite{HTMMM,HTMM} for background on infinitary bi-interpretability.) Thus we can prove the theorem by constructing such an example of coloured trees. The transformation can be defined recursively. Given a coloured tree $T$ with root $r$, let $a_0,a_1,\ldots$ be the children of $r$, and let $T_i$ be the coloured subtrees of $T$ below $a_i$.
\[ \begin{tikzpicture}[baseline=(current bounding box.center),level 2/.style={sibling distance=18mm},level 1/.style={sibling distance=30mm}]
  \node {$r$} [grow'=up]
    child {node {$a_0$}
      child {node (c1) {}}
      child {node (c2) {}}
    }
    child {node {$a_1$}
      child {node (c3) {}}
      child {node (c4) {}}
    }
    child {node {$a_2$}
      child {node (c5) {}}
      child {node (c6) {}}
    }
    child {node {$\cdots$}
};

    \node at ($(c1)!0.5!(c2)$) {$T_0$};
    \node at ($(c3)!0.5!(c4)$) {$T_1$};
    \node at ($(c5)!0.5!(c6)$) {$T_2$};
\end{tikzpicture}
.\]
Let $n$ be the colour of $r$. Then $\Phi(T)$ will be the following tree.
\[\begin{tikzpicture}[baseline=(current bounding box.center),level 2/.style={sibling distance=18mm},level 1/.style={sibling distance=50mm},level 3/.style={sibling distance=25mm},level 4/.style={sibling distance=25mm}]
  \node {$r$} [grow'=up]
    child {node {}
    child {node {}
    child {node {}
    child {node {$a_0$}
      child {node (c1) {}}
      child {node (c2) {}}
    }
    child {node {$a_1$}
      child {node (c3) {}}
      child {node (c4) {}}
    }
    child {node {$a_2$}
      child {node (c5) {}}
      child {node (c6) {}}
    }
    child {node {$\cdots$}
}}}child {node {}}}
    child {node (c0) {}
        child {node {}
        child {node {} edge from parent[dotted, thick]
        child {node (cn) {} edge from parent[solid]
        child {node {}}
        child {node {}}}}
        }
    };

    \node at ($(c1)!0.5!(c2)$) {$\Phi(T_0)$};
    \node at ($(c3)!0.5!(c4)$) {$\Phi(T_1)$};
    \node at ($(c5)!0.5!(c6)$) {$\Phi(T_2)$};

    \draw[
  decorate,
  decoration={brace, mirror, amplitude=6pt}
]
  ($(c0.east)+(4pt,0)$) --
  ($(cn.east)+(4pt,0)$)
  node[midway, right=8pt] {length $n+1$};
\end{tikzpicture}
.\]
It is straightforward to see that each of the colours are both universally and existentially definable in $\Phi(T)$, and $T$ and $\Phi(T)$ are infinitary bi-interpretable. For the remainder of the proof we work with coloured trees.

The sentence $\varphi$ will say that the $P$ is the parent relation of a tree with root $r$. The sentence $\varphi$ will also say that the tree has height 1 ($\forall x \; x = r \vee P(x) = r$) and is infinite ($\exists^\infty x \; P(x) = r$), as well as that any two children of the root differ on some colour ($\forall x\neq y\bigdoublevee_{n<\omega}\neg(R_n(x)\leftrightarrow R_n(y))$) and that each finite set of colours is realized. This last condition can be expressed as \[\bigdoublewedge_{\sigma\in2^{<\omega}}\exists x\bigwedge_{i<\len(\sigma)}(\neg)^{\sigma(i)}R_i(x)\]
where $(\neg)^{1}$ is just $\neg$ and $(\neg)^{0}$ is ignored. Thus $\vp$ is $\Pi_2$ and one can show it is consistent using standard arguments. 

By [Mon15] it suffices to show that if $T\models\vp$ then each automorphism orbit is $\Sigma_2$ definable but there is some automorphism orbit that is not $\Sigma_1$ definable, even with parameters. Let $\bar{a}\in T$. For $i<\len(\bar{a})$ let $\sigma_i\in2^\omega$ be such that $T\models\bigdoublewedge_{n<\omega}(\neg)^{\sigma_i(n)}R_n(a_i)$. Then the formula $\bigwedge_{i<\len(\bar{a})}\bigdoublewedge_{n<\omega}(\neg)^{\sigma_i(n)}R_n(x_i)$ defines $\bar{a}$, and this is a $\Pi_1$ formula. 

For the sake of contradiction suppose every automorphism orbit is $\Sigma_1$-definable with parameters $\bar{p}\in T$. Since $T$ is infinite, we may pick some $b\in T$ that is not the root and not part of $\bar{p}$. Suppose $\exists\bar{y}\,\psi(x,\bar{y},\bar{p})$ is a $\Sigma_1$ formula defining $b$, where $\psi$ is a first order quantifier free formula. Let $T\models\psi(b,\bar{c},\bar{p})$ where $\bar{c}\in T$ and let $R_0,\dots,R_{n-1}$ be the only unary predicates that appears in $\psi$. Because every finite choice of colours appears and every vertex receives different colours, there is a $b'\in T$ that is not $b$, the root, nor one of $\bar{p}$, but agrees with $b$ on the first $n$ colours, i.e., $T\models\bigdoublewedge_{i<n}R_i(b')\leftrightarrow R_i(b)$. If $\bar{c}'$ is $\bar{c}$ with changing the appearances of $b$ to $b'$, then $(b,\bar{c},\bar{p})$ and $(b',\bar{c}',\bar{p})$ have the same first order quantifier free diagram if we restrict the colours to $R_0,\dots,R_{n-1}$. Thus $T\models \psi(b',\bar{c}',\bar{p})$, meaning $b\cong b'$. However, since $b$ and $b'$ disagree on one of the colours $R_i$ (for $i\geq n$), this is a contradiction.
\end{proof}

\bibliographystyle{alpha}
\bibliography{References}

\end{document}